\documentclass[12pt,reqno,oneside,a4paper]{article}

\usepackage{amsthm}
\usepackage{amsmath}
\usepackage{float}
\usepackage{graphicx}
\usepackage{enumitem}
\usepackage{titlesec}
\usepackage{enumitem}
\usepackage{amssymb}
\usepackage[nottoc,notlot,notlof]{tocbibind}
\usepackage[utf8]{inputenc}
\usepackage{framed}
\usepackage{pgf,tikz,pgfplots}
\usepackage{mathrsfs}
\usetikzlibrary{arrows}
\usepackage{blindtext}
\usepackage{mathrsfs}
\usepackage{realboxes}
\usepackage{chngcntr}
\usepackage{relsize}
\usepackage{mathtools}
\usepackage{subcaption}
\usepackage{pgf,tikz,pgfplots}
\usepackage{mathrsfs}
\usetikzlibrary{arrows}
\newtheorem{theorem}{Theorem}
\newtheorem{lemma}[theorem]{Lemma}
\newtheorem{cor}[theorem]{Corollary}
\newtheorem{prop}[theorem]{Proposition}

\theoremstyle{definition}
\newtheorem{definition}[theorem]{Definition}

\theoremstyle{remark}



\newcommand{\pra}{\mathbb{R}}
\newcommand{\mig}{\mathbb{C}}
\newcommand{\fis}{\mathbb{N}}

\makeatletter
\newcommand{\displaybump}{\hbox to \@totalleftmargin{\hfil}}
\makeatother
\numberwithin{theorem}{section} 
\numberwithin{equation}{section}



\title{A new family of entire functions with no wandering domains}
\author{Yannis Dourekas}
 \date{}

\begin{document}
\maketitle

\begin{abstract}
The issue of whether an analytic function has wandering domains has long been of interest in complex dynamics. Sullivan proved in 1985 that rational maps do not have wandering domains. On the other hand, several transcendental entire functions have wandering domains. Using recent results on the relationship between Fatou components and the postsingular set, we find a new family of transcendental entire functions that does not have wandering domains. We also prove that the Julia set of a certain subfamily is the whole plane.
\end{abstract}
\section{Introduction}
        Let $f : \mig \to \mig$ be a transcendental entire function. We denote by $f^n$ the $n$th iterate of $f$, for $n=0, 1, 2 \ldots$. The \emph{Fatou set} of $f$, $F(f)$, is the set of points $z \in \mig$ such that the sequence $\{f^n\}_{n \in \fis}$ forms a normal family in some neighborhood of $z$. The complement of the Fatou set is the \emph{Julia set} of $f$, $J(f)$. Another set of note is the \emph{escaping set} of $f$, $I(f)$, defined as the set of points that tend to infinity under iteration.

Let $U$ be a component of the Fatou set of $f$ such that, for any $n, m \in \fis$ with $n \neq m$, $f^n(U) \cap f^m(U) = \varnothing$. Then $U$ is called a \emph{wandering domain} of~$f$.

One of the most celebrated results in complex dynamics is that rational functions do not have wandering domains. Sullivan proved this result in 1985, using tools such as quasiconformal homeomorphisms \cite{sull}.

Wandering domains do exist for transcendental entire functions. It is known that, if $U$ is a wandering domain of a transcendental entire function $f$, all limit functions of $\{f^n|_U\}$ are constant \cite[Section 28]{fatou}. Wandering domains for transcendental entire functions can thus be completely categorised into three groups:
\begin{itemize}
\item if the only limit function is $\infty$, $U$ is called escaping;
\item if the limit functions all lie in a bounded set, $U$ is called of bounded orbit; and
\item if the limit functions include both finite values and $\infty$, $U$ is called oscillating.
\end{itemize}
 Most known examples of wandering domains are escaping, and the first such example was constructed by Baker in 1976 \cite{baker1}. Another example was given by Herman \cite{herman}. The first example of an oscillating wandering domain was given by Eremenko and Lyubich in 1987 \cite{erl2}, with more recent examples being given by Bishop in 2015 \cite{bishop}, and by Martí-Pete and Shishikura in 2018 \cite{david}. Note that the existence of  wandering domains where all limit functions of $\{f^n|_U\}$ lie in a bounded set is an open question.

There are, on the other hand, several families of transcendental entire functions which have been shown not to have wandering domains, as described in  \cite[Section~4.6]{bsurvey}. The methods used to prove these results usually emanate from Sullivan's techniques. Alternative techniques have been used, for example by Bergweiler in 1993 \cite{berg} and Mihaljević-Brandt and Rempe-Gillen in 2013 \cite{lasse1}. 

In this paper we use a new technique based on results on the relationship between wandering domains and points in the postsingular set, proved in 2017 by Barański, Fagella, Jarque and Karpińska \cite[Theorem B]{nuria}. We consider the family of functions $\mathcal{F}_p$, for $p \geq 3$, defined as
\begin{align*}
\mathcal{F}_p = \left\{ f_{\lambda}: f_{\lambda}(z) = \lambda \sum_{k=0}^{p-1} \exp( \omega_p^k z) \text{, for some } \lambda \in \pra^{*} \right\},
\end{align*}
where $\omega_p= \exp(2 \pi i/p)$ is a $p$th root of unity. We use several properties of these functions from \cite{dave2} and \cite{cbsw}; most importantly, that if $f \in \mathcal{F}_p$, for some $p \geq 3$, then $J(f)$ has a structure known as a \emph{spider's web}, which is a connected set containing a sequence of loops, while also containing a \emph{Cantor bouquet} (a collection of pairwise disjoint curves to infinity that is ambiently homeomorphic to a straight brush).

Our main result is the following.
\begin{theorem}  Let $f_{\lambda} \in \mathcal{F}_p$, $p \geq 3$, with $p$ even and $\lambda \in \pra^{*}$. Then $f$ has no wandering domains. \label{theo}
\end{theorem}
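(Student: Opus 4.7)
The plan is to invoke Theorem~B of \cite{nuria}, which asserts that every finite constant limit function $\varphi$ of $\{f^n|_U\}$ on a wandering domain $U$ must satisfy $\varphi(U) \subset P(f)'$, the derived set of the postsingular set. Combined with the structural results on $\mathcal{F}_p$ proved in \cite{dave2, cbsw}, this reduces the problem to two tasks: ruling out escaping wandering domains, and showing that $P(f_\lambda)$ has no accumulation point in $\mathbb{C}$.

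First I would rule out escaping wandering domains using the spider's web structure of $J(f_\lambda)$ from \cite{dave2, cbsw}: every Fatou component of $f_\lambda$ is bounded (trapped between two consecutive loops of the web), and since $f_\lambda$ lies in the Eremenko--Lyubich class $\mathcal{B}$, a theorem of Eremenko and Lyubich gives $I(f_\lambda) \subset J(f_\lambda)$. Hence no Fatou component can consist entirely of escaping points, so no wandering domain of $f_\lambda$ is escaping.

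Next, to control the postsingular set, I would exploit the dihedral symmetries of $f_\lambda$: the rotational invariance $f_\lambda(\omega_p z) = f_\lambda(z)$ (which for even $p$ already includes $f_\lambda(-z) = f_\lambda(z)$) and the reflection $f_\lambda(\bar z) = \overline{f_\lambda(z)}$ coming from $\lambda \in \mathbb{R}^*$. These reduce the analysis of the critical points and their forward orbits to a single fundamental sector. Using the Cantor bouquet description together with the spider's web loop hierarchy in \cite{dave2, cbsw}, I would show that every singular orbit escapes to infinity and that the full postsingular set has no finite accumulation point. With Step~1, this completes the argument: any wandering domain $U$ would be oscillating or of bounded orbit and therefore possess a finite constant limit function $\varphi$; but Theorem~B of \cite{nuria} then forces $\varphi(U) \subset P(f_\lambda)' \cap \mathbb{C} = \varnothing$, a contradiction.

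The hard part will be showing that $P(f_\lambda) \cap K$ is finite for every compact $K \subset \mathbb{C}$. Each individual critical orbit escapes, but one must rule out that the union of the infinitely many critical orbits clusters at a finite point. This will require quantitative escape estimates --- likely the growth of $f_\lambda$ between the loops of the spider's web, as analysed in \cite{dave2, cbsw} --- combined with the dihedral symmetry to globalise from a fundamental sector. The restriction to even $p$ enters crucially here: then $f_\lambda$ is an even function whose restriction to the real axis behaves monotonically away from the origin, so the distinguished real critical orbit emanating from $0$ escapes along $\mathbb{R}$ in a well-controlled way, and the symmetry group generated by $z \mapsto \omega_p z$ and $z \mapsto \bar z$ acts with enough transitivity to transport this control to all critical orbits.
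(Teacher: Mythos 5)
Your strategy has two genuine gaps, and the second one is fatal for the theorem as stated. First, your Step 1 rests on $f_\lambda$ belonging to the Eremenko--Lyubich class $\mathcal{B}$, but it does not: the critical values of $f_\lambda$, while real (Lemma \ref{cvs}), are unbounded. For instance, for $p=4$ one has $f_\lambda(z)=2\lambda(\cos z+\cosh z)$, whose critical points on the ray $V_0=\{x(1+i):x>0\}$ occur at solutions of $\tan x=\tanh x$ and have critical values $2\lambda\cos x\cosh x$, which tend to infinity in modulus. So $S(f_\lambda)$ is unbounded, the Eremenko--Lyubich theorem $I(f)\subset J(f)$ is unavailable, and escaping wandering domains are not ruled out by this route (note that an $A_R(f)$ spider's web is perfectly compatible with fast escaping wandering domains in general, so some argument specific to this family is needed; the paper never excludes escaping wandering domains separately at all).

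Second, the ``hard part'' you identify --- that every singular orbit escapes and $P(f_\lambda)$ has no finite accumulation point --- is actually false for some parameters covered by the theorem. The theorem allows every $\lambda\in\pra^{*}$, and for $\lambda=1/4$, $p=4$ the paper's own Proposition 4.3 exhibits an attracting fixed point in $(0,\pi/2)$; by classical Fatou theory its immediate basin must contain a singular value, whose orbit then converges to the fixed point, so $P(f)$ accumulates at a finite point. Hence no amount of quantitative escape estimates can deliver the discreteness of $P(f)$ you need, and the derived-set criterion for constant limit functions (which, incidentally, is the Bergweiler--Haruta--Kriete--Meier--Terglane result rather than Theorem B of \cite{nuria}, whose actual content is the distance estimate quoted as Theorem \ref{nuria}) cannot finish the proof in this generality. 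The paper's argument avoids both issues: it uses only $P(f)\subset\pra$ (Corollary \ref{corpf}), the quantitative distance statement of Theorem \ref{nuria} together with Lemma \ref{bound2} (every Fatou point lies within bounded distance of $J(f)$, via the Cantor-bouquet curves of Lemma \ref{curveswd}), Osborne's theorem (Corollary \ref{osb2}) to exclude bounded orbits, and then an inductive trapping argument showing the orbit of a putative wandering domain would eventually stay in $T_0(\nu)$, contradicting Lemma \ref{g}; this treats escaping and oscillating wandering domains simultaneously and needs no information about accumulation points of $P(f)$.
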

Using Theorem \ref{theo}, we can also prove the following.
\begin{theorem}  Let $f_{\lambda} \in \mathcal{F}_p$, $p \geq 3$, with $p$ even and $|\lambda| \geq 1$. Then $J(f) = \mig$. \label{theo2}
\end{theorem}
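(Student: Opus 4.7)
\textbf{Proof plan for Theorem~\ref{theo2}.}
The strategy is to rule out every possible kind of Fatou component, forcing $F(f_\lambda)=\varnothing$ and hence $J(f_\lambda)=\mig$. First, Theorem~\ref{theo} (which applies since $p$ is even) already removes wandering domains, so every Fatou component of $f_\lambda$ is eventually mapped into a periodic cycle of components. Second, the spider's web structure of $J(f_\lambda)$ established in \cite{dave2,cbsw} implies that $\mig\setminus J(f_\lambda)$ has only bounded components, so every Fatou component of $f_\lambda$ is bounded. Because Baker domains consist of points escaping to $\infty$ and are therefore unbounded, this immediately excludes Baker domains; the remaining candidates for a periodic Fatou cycle are attracting basins, parabolic basins, and Siegel disks.

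For any such cycle there exists a singular value $s\in S(f_\lambda)$ whose forward orbit is either attracted to the cycle (in the attracting or parabolic case) or accumulates on its boundary (in the Siegel case). It therefore suffices to prove that, under the hypothesis $|\lambda|\geq 1$ with $p$ even, every singular value of $f_\lambda$ belongs to the escaping set $I(f_\lambda)$, since an escaping orbit cannot converge to a finite cycle nor accumulate on the compact boundary of a Siegel disk.

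To verify this I would exploit two structural features of $f_\lambda$ for $p$ even. The identity $f_\lambda(\omega_p z)=f_\lambda(z)$ together with $\omega_p^{p/2}=-1$ makes $f_\lambda$ an even entire function with real Taylor coefficients, so that $f_\lambda(\pra)\subset\pra$ and, along the real axis, the summand $e^z$ from $k=0$ forces $|f_\lambda(x)|$ to grow like $|\lambda|e^{|x|}$ as $|x|\to\infty$. Moreover, the identity $f_\lambda(z)=g_\lambda(z^p)$, where $g_\lambda(w)=p\lambda\sum_{m\geq 0}w^m/(pm)!$, gives a decomposition of the critical-value set of $f_\lambda$ as $\{p\lambda\}$ together with the critical values of the lower-order entire function $g_\lambda$. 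The distinguished critical value $p\lambda=f_\lambda(0)$ satisfies $|p\lambda|\geq p\geq 4$, and iterating the real-axis growth estimate shows that $0\in I(f_\lambda)$. For the remaining critical values I would combine the $\omega_p$-symmetry with the description of the singular set of $f_\lambda$ available from \cite{dave2,cbsw} to check that each such value either already lies in, or is eventually mapped into, a region where the above growth estimate forces escape. Any finite asymptotic values would be ruled out or controlled using that $f_\lambda$ has order $1$, together with a direct check that $f_\lambda(z)\to\infty$ along every path to infinity.

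The main obstacle is this last step: producing a uniform escape statement for the full singular set $S(f_\lambda)$ for every even $p\geq 4$, rather than just for the distinguished critical point $0$. Once that is in place, the absence of attracting basins, parabolic basins and Siegel disks follows, yielding $F(f_\lambda)=\varnothing$ and hence $J(f_\lambda)=\mig$.
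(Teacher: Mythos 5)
Your overall skeleton matches the paper's: Theorem~\ref{theo} removes wandering domains, the spider's web structure of $J(f)$ makes every Fatou component bounded and so excludes Baker domains, and the remaining attracting/parabolic/Siegel cases are to be killed by showing that the relevant singular orbits escape. But the step you yourself flag as ``the main obstacle'' is precisely the content of the theorem, and your sketch does not supply it. The paper closes this gap with two ingredients you do not use. First, it does not attempt a case-by-case analysis of the critical values of the factor $g_\lambda$: Lemma~\ref{cvs} (from \cite{cbsw}) gives $CV(f)\subset\pra$ outright, finite asymptotic values are excluded because $A_R(f)$ is a spider's web \cite[Theorem 1.8]{rs1}, and since $f$ is real on $\pra$ this yields $\overline{P(f)}\subset\pra$ (Corollary~\ref{corpf}). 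Second, and crucially, escape of the whole postsingular set is obtained from the \emph{global} inequality $f(x)>|x|$ for every $x\in\pra$, proved by an explicit minimization of $g(x)=\frac{p}{x}\bigl(1+\frac{x^p}{p!}\bigr)$ on $(0,\infty)$ in the extremal case $\lambda=1$ (evenness of $f$ handles $\lambda\le -1$). Your substitute --- the asymptotic growth $|f_\lambda(x)|\sim|\lambda|e^{|x|}$ plus ``iterating the real-axis growth estimate'' --- is not sufficient: asymptotic growth says nothing about the orbit of a singular value that starts, or falls back, into a bounded part of $\pra$, and indeed for small $\lambda$ the conclusion is false (the paper's Proposition shows $f_{1/4}\in\mathcal{F}_4$ has a real attracting fixed point). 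The hypothesis $|\lambda|\ge 1$ enters exactly at this pointwise inequality, and your plan never uses it.

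Two further points. Your proposed treatment of asymptotic values --- ``a direct check that $f_\lambda(z)\to\infty$ along every path to infinity'' --- cannot work: no transcendental entire function tends to $\infty$ along every path to infinity (that would make $f$ a proper map of $\mig$, hence a polynomial), and order-$1$ considerations only bound the number of finite asymptotic values, they do not remove them; the spider's web argument via \cite[Theorem 1.8]{rs1} is the correct replacement. Finally, once $\overline{P(f)}\subset\pra\subset I(f)$ is known, the contradiction is cleaner than ``an escaping orbit cannot accumulate on the Siegel boundary'': by \cite[Theorem 7]{bsurvey} each of the three cycle types forces $\overline{U}\cap\overline{P(f)}\neq\varnothing$, while the closure of a bounded periodic Fatou component has bounded orbit and so is disjoint from $I(f)$.
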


We note that the methods discussed here cannot be applied in the case where $p$ is odd, as the situation is more delicate in that case, with the postsingular set lying in a part of the plane where we do not have good control of the dynamics.\\

The structure of this paper is as follows:
\begin{itemize}
\item In Section 2, we fix some notation and state relevant definitions and results from transcendental dynamics. These include the definition of a spider's web (by Rippon and Stallard; see \cite{rs1}), along with the main theorem we use to get our results, by Barański, Fagella, Jarque and Karpińska (\cite[Theorem B]{nuria}).
\item In Section 3, we discuss properties of the functions in $\mathcal{F}_p$. Many of these properties were proved by Sixsmith \cite{dave2}, while others follow from results we proved in \cite{cbsw}.
\item Section 4 contains the proofs of Theorems \ref{theo} and \ref{theo2}, along with an example that demonstrates that there exist small values of $\lambda$ for which the result of Theorem \ref{theo2} does not hold.
\end{itemize}

\emph{Acknowledgements.} I would like to thank my supervisors, Prof Gwyneth Stallard and Prof Phil Rippon, for their boundless patience and unsparing guidance.
\section{Preliminaries}
In this section we give some notation and results from transcendental dynamics that we will use in the proofs of Theorems \ref{theo} and \ref{theo2}.

 The point $z$ is a \emph{critical point} of $f$ if $f'(z) = 0$. If $z$ is a critical point, then $f(z)$ is called a \emph{critical value} of $f$. A (finite) \emph{asymptotic value} of $f$ is a value $w \in \mig$ for which there exists a curve  $\gamma: (0, \infty) \mapsto \mig$ with $\gamma(t) \to \infty$ as $t \to \infty$, such that $f(\gamma(t)) \to w$ as $t \to \infty$. We denote the set of critical points, the set of critical values, and the set of finite asymptotic values of $f$ by $CP(f)$, $CV(f)$ and $AV(f)$ respectively. We define the set of \emph{singular values} of $f$ as
\begin{equation*}
S(f) : = \overline{CV(f) \cup AV(f)}.
\end{equation*}
 We further define the \emph{postsingular set} as 
\begin{equation*}
P(f) : = \overline{ \cup_{n \geq 0} f^n(S(f))}.
\end{equation*}

The \emph{fast escaping set} of $f$, $A(f)$, is roughly defined as the set of points that tend to infinity under iteration ``as fast as possible''. The formal definition of the fast escaping set, which can be found in \cite{rs1}, along with an extensive study of many of its properties, is
\begin{align*}
A(f) = \{z \in \mig: \exists L \in \fis \text{ such that } |f^{n+L}(z)| \geq M^n(R,f) \text{ for } n \in \fis \},
\end{align*}
where
\begin{align*}
M(r,f) = \max_{|z|=r} |f(z)|, \text{ for } r>0,
\end{align*}
$M^n(r,f)$ denotes iteration of $M(r,f)$ with respect to the variable $r$, and $R>0$ is any value large enough so that $M(r,f)>r$ for $r \geq R$. We define $A_R(f) := \{ z : |f^n(z)| \geq M^n(R,f), \text{ for } n \in \fis\}$. 

In the same paper \cite{rs1}, the notion of a \emph{spider's web} is introduced. This is a connected structure containing a sequence of loops. The formal definition is as follows:
\begin{definition} \label{swdef}
A set $E$ is an (infinite) spider's web if $E$ is connected and there exists a sequence of bounded simply connected domains $G_n$, with $G_n \subset G_{n+1}$, for $n \in \fis$, $\partial G_n \subset E$, for $n \in \fis$ and $\cup_{n \in \fis} G_n = \mig$.
\end{definition}

It is known that the escaping, fast escaping, and Julia sets of many transcendental entire functions are spiders' webs  \cite{rs1}. We note that the spiders' webs that arise in complex dynamics are extremely elaborate; see \cite{john} and \cite{rs1}.

We turn our attention to wandering domains. 
We start by quoting \cite[Theorem 1.5 (a)(ii)]{osb}: 
\begin{theorem} \label{wand}
Let $f$ be a transcendental entire function, let $R>0$ be such that $M(r,f)>r$ for $r \geq R$, and let $A_R(f)$ be a spider's web. Suppose that $K$ is a component of $A(f)^{\mathsf{c}}$ with bounded orbit. Then, if the interior of $K$ is non-empty, this interior consists of non-wandering Fatou components.
\end{theorem}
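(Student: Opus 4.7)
The plan is to argue in three stages, with the last being the main difficulty.

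\textbf{Normality on $\mathrm{int}(K)$.} Because $K$ has bounded orbit, there exists $R_0 > 0$ such that $\bigcup_{n \geq 0} f^n(K) \subset B(0, R_0)$, so the family $\{f^n\}$ is uniformly bounded on $\mathrm{int}(K)$. Montel's theorem then gives that $\{f^n\}$ is normal on $\mathrm{int}(K)$, whence $\mathrm{int}(K) \subset F(f)$. Thus $\mathrm{int}(K)$ decomposes as a disjoint union of Fatou components, and the task reduces to showing that none of them is wandering.

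\textbf{Trapping a putative wandering component.} Suppose, for contradiction, that some Fatou component $U \subset \mathrm{int}(K)$ is wandering. For each $n \geq 0$, the image $f^n(U)$ is connected and, since $A(f)$ is forward invariant, lies in a single component $K_n$ of $A(f)^{\mathsf{c}}$. Pick $M \in \fis$ with $B(0, R_0) \subset G_M$ in the spider's web of $A_R(f)$; then $K_n$ meets $G_M$ through $f^n(U)$. Since $\partial G_M \subset A_R(f) \subset A(f)$ is disjoint from the connected set $K_n \subset A(f)^{\mathsf{c}}$, the set $K_n$ cannot cross $\partial G_M$, so $K_n \subset G_M$. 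Therefore $\bigcup_{n \geq 0} f^n(U) \subset G_M$ is bounded.

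\textbf{Contradicting the wandering hypothesis.} This last step is the main obstacle. Since $U$ is wandering and its orbit is trapped in the bounded set $G_M$, every limit function of $\{f^n|_U\}$ is constant and takes a finite value $c \in J(f)$. My plan is to invoke the classical theorem of Bergweiler--Haruta--Kriete--Meier--Terglane that each such $c$ lies in the derived set of the postsingular orbit, and then to use the spider's web structure to exclude this: the loops $\partial G_n \subset A_R(f) \subset A(f)$ partition $\mig$ into bounded cells, so any accumulation of postsingular points at $c \in G_M$ must eventually be confined to a single such cell, yielding a forward-invariant portion of the postsingular set trapped inside $A(f)^{\mathsf{c}}$ and accumulating at $c$. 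Combined with the boundedness of the orbit of $K$ established above, this should force $U$ to be pre-periodic rather than wandering. Making this last implication precise --- specifically ruling out the coexistence of a wandering domain with a bounded non-fast-escaping tail of a singular orbit inside the same cell of the spider's web --- is the crux of the proof.
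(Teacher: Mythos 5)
First, a point of reference: the paper does not prove this statement at all --- it is quoted verbatim as \cite[Theorem 1.5 (a)(ii)]{osb} (Osborne), and Corollary \ref{osb2} (no wandering domains with bounded orbit) is then read off from it. So there is no internal proof to compare against, and your attempt has to stand on its own. Your first two stages do: bounded orbit plus Montel gives $\mathrm{int}(K) \subset F(f)$, and the loop $\partial G_M \subset A_R(f) \subset A(f)$ together with connectedness and complete invariance of $A(f)$ traps the whole orbit of a putative wandering component in $G_M$. Both arguments are correct.

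The genuine gap is your third stage, and it is not a technicality: ruling out a wandering domain whose orbit is bounded is precisely the entire content of the theorem (it is exactly Corollary \ref{osb2}), and you explicitly leave it open. Moreover, the route you sketch does not look like it can close it. The Bergweiler--Haruta--Kriete--Meier--Terglane result does place the constant limit $c \in J(f)$ in the derived set of the postsingular orbit, but the configuration you then propose to exclude --- a bounded, non-fast-escaping tail of a singular orbit confined to one complementary cell of the $A_R(f)$ spider's web, accumulating at a point of $G_M$ --- is not in itself contradictory. Indeed, the paper's own Proposition 4.3 exhibits $f_{1/4} \in \mathcal{F}_4$ with an attracting fixed point, so its postsingular set has exactly such a bounded tail inside $A(f)^{\mathsf{c}}$, even though $A_R(f)$ is a spider's web by Theorem \ref{sw}; such functions have no wandering domains, but not because that configuration is impossible. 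At the level of generality of the statement (an arbitrary transcendental entire $f$ with an $A_R(f)$ spider's web) you also have no control over $P(f)$ of the kind the paper develops only for $\mathcal{F}_p$, so there is nothing to feed into a postsingular argument. A proof along the lines of the literature instead exploits the structure of the compact components of $A(f)^{\mathsf{c}}$ and the fact that $J(f)=\partial A(f)$, together with the Rippon--Stallard result that a Fatou component meeting $A(f)$ lies in $A(f)$; some ingredient of this kind, not the one you name, is what is missing.
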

An immediate corollary of this theorem, and one that we will use to prove our results, is the following:
\begin{cor} \label{osb2}
Let $f$ be a transcendental entire function, let $R>0$ be such that $M(r,f)>r$ for $r \geq R$, and let $A_R(f)$ be a spider's web. Then $f$ has no wandering domains with bounded orbit.
\end{cor}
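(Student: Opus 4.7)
The plan is to argue by contradiction: suppose $f$ has a wandering domain $U$ with bounded orbit, so that $\bigcup_{n \geq 0} f^n(U)$ lies in some bounded set $B \subset \mig$. In particular, no point of $U$ escapes to infinity, so $U \subset A(f)^{\mathsf{c}}$, and I let $K$ denote the component of $A(f)^{\mathsf{c}}$ containing $U$. Because $U$ is open, the interior of $K$ is non-empty. The strategy is then to verify that $K$ itself has bounded orbit, invoke Theorem~\ref{wand} to deduce that every Fatou component in $\operatorname{int}(K)$ is non-wandering, and thus contradict the wandering nature of $U \subset \operatorname{int}(K)$.

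The step that needs attention is passing the bounded-orbit property from $U$ to the possibly much larger component $K$. Since $A(f)^{\mathsf{c}}$ is forward invariant (because $f^{-1}(A(f)) \subset A(f)$), each iterate $f^n(K)$ is a connected subset of $A(f)^{\mathsf{c}}$ and hence lies in a single component $K_n$ of $A(f)^{\mathsf{c}}$. The spider's web hypothesis on $A_R(f)$ provides a nested sequence of bounded simply connected domains $G_m$ with $\partial G_m \subset A_R(f) \subset A(f)$ and $\bigcup_m G_m = \mig$; fix $m$ large enough that $B \subset G_m$. For each $n$, the connected set $K_n$ meets $G_m$ (as it contains $f^n(U) \subset B$) and is disjoint from $\partial G_m$ (since $\partial G_m \subset A(f)$ while $K_n \subset A(f)^{\mathsf{c}}$), forcing $K_n \subset G_m$. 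Therefore $\bigcup_n f^n(K) \subset G_m$ is bounded.

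Applying Theorem~\ref{wand} to $K$ now yields that the interior of $K$ consists of non-wandering Fatou components, contradicting the assumption that $U$ is wandering.

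The one point that is not purely bookkeeping is the argument that $K$ inherits a bounded orbit from $U$: the loops $\partial G_m$ of the spider's web serve as impenetrable barriers for any connected subset of $A(f)^{\mathsf{c}}$, which is what pins every iterate of $K$ inside a single $G_m$ and makes Theorem~\ref{wand} applicable.
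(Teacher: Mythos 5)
Your proposal is correct and is essentially the argument the paper leaves implicit: the paper states this as an ``immediate'' consequence of Theorem \ref{wand} without writing a proof, and your derivation --- placing the wandering domain $U$ inside a component $K$ of $A(f)^{\mathsf{c}}$ and using the loops $\partial G_m \subset A_R(f)$ as barriers to show $K$ has bounded orbit before invoking Theorem \ref{wand} --- is exactly the right way to fill in that deduction. (Note only the bounded orbit of a single point $z_0 \in U$ is actually needed to see each $K_n$ meets $G_m$, which also squares with the paper's limit-function definition of ``bounded orbit''.)
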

We now quote \cite[Theorem B]{nuria} which was mentioned in the introduction as providing a new technique for ruling out wandering domains; this forms the basis for our results. It describes a relationship on the distance between the postsingular set and forward images of Fatou components.
\begin{theorem} \label{nuria} Let $f$ be a transcendental meromorphic map and $U$ be a Fatou component of $f$. Denote by $U_n$ the Fatou component such that $f^n(U) \subset U_n$. Then, for every $z \in U$, there exists a sequence $(p_n)$ in $P(f)$ such that 
\begin{align*}
\frac{\operatorname{dist}(p_n,U_n)}{\operatorname{dist}(f^n(z),\partial U_n)} \to 0, \text{ as } n \to \infty.
\end{align*}
In particular, if for some $d>0$ we have $\operatorname{dist}(f^n(z), \partial U_n)<d$ for all $n$, then $\operatorname{dist} (p_n, U_n) \to 0$ as $n$ tends to $\infty$.
\end{theorem}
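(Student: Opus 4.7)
The plan is to argue by contradiction. Suppose the conclusion fails, so that there exist $\epsilon>0$ and a subsequence $(n_k)$ with $\operatorname{dist}(p,U_{n_k})\geq \epsilon\, d_{n_k}$ for every $p\in P(f)$, writing $d_n:=\operatorname{dist}(f^n(z),\partial U_n)$. The aim is to convert this uniform buffer around $U_{n_k}$ into a geometric obstruction to the normality of $\{f^n\}$ on $U$.

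First, for each $n_k$ I would build a simply connected neighbourhood $V_{n_k}$ of the orbit point $f^{n_k}(z)$ which avoids $P(f)$ and extends comfortably beyond $U_{n_k}$. After disposing of the easy case $P(f)\cap U_{n_k}\neq\varnothing$ (which already yields the conclusion), the natural choice is the Euclidean disk $V_{n_k}=B(f^{n_k}(z),(1+\epsilon)d_{n_k})$: any $p\in P(f)\setminus U_{n_k}$ is reached from $f^{n_k}(z)$ only by crossing $\partial U_{n_k}$ at distance $\geq d_{n_k}$ and then travelling a further $\geq \epsilon\, d_{n_k}$, so $|p-f^{n_k}(z)|\geq (1+\epsilon)d_{n_k}$.

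Next I would lift $V_{n_k}$ univalently along the orbit of $z$. Since $V_{n_k}$ is simply connected and avoids $S(f)\subset P(f)$, each component of $f^{-1}(V_{n_k})$ maps biholomorphically onto $V_{n_k}$; and because $P(f)$ is forward-invariant, each successive preimage again avoids $P(f)$ (otherwise forward iteration would drop a postsingular point into $V_{n_k}$), so the lift can be iterated $n_k$ times to yield a univalent inverse branch $h_{n_k}\colon V_{n_k}\to W_{n_k}$ with $h_{n_k}(f^{n_k}(z))=z$. Applying Koebe's quarter theorem to $h_{n_k}$ on the round disk $V_{n_k}$, together with the Schwarz--Pick estimate $|(f^{n_k})'(z)|\leq \rho_U(z)/\rho_{U_{n_k}}(f^{n_k}(z))\leq 2\,\rho_U(z)\,d_{n_k}$ (using the universal lower bound $\rho_D(w)\geq 1/(2\operatorname{dist}(w,\partial D))$ for hyperbolic plane domains), then produces a Euclidean disk $B(z,r)\subset W_{n_k}$ of radius $r>0$ independent of $k$ on which each $f^{n_k}$ is univalent.

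The contradiction should then be extracted on this fixed disk. The family $\{f^{n_k}|_{B(z,r)}\}$ consists of univalent maps with images in the bounded disks $V_{n_k}$, and after an affine renormalisation is a normal family of univalent functions; on the other hand, on $B(z,r)\cap U\subset F(f)$ the ordinary normality of $\{f^n\}$ on the Fatou component forces every subsequential limit of $\{f^{n_k}\}$ to be constant in $\widehat{\mig}$. Matching the quasi-affine behaviour on $B(z,r)$ coming from univalence with the constant limits coming from the Fatou dynamics, while keeping the image geometry inside $V_{n_k}$ under control, is where I expect the real work to sit --- most likely via a refined hyperbolic-metric calculation on $\widehat{\mig}\setminus P(f)$ that turns the fixed-disk univalence into strict growth of $|(f^{n_k})'(z)|$ clashing with the Schwarz--Pick bound. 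Beyond this reconciliation, the main remaining technical obstacles are the degenerate cases $P(f)\cap U_{n_k}\neq\varnothing$ (handled quickly) and $d_{n_k}\to\infty$, in which $V_{n_k}$ must be replaced by a more intrinsic simply connected subdomain of $\widehat{\mig}\setminus P(f)$ and the normal-family step phrased spherically.
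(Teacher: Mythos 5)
First, a point of comparison: the paper does not prove this statement at all --- it is quoted verbatim as \cite[Theorem B]{nuria} from Barański, Fagella, Jarque and Karpińska --- so your attempt has to stand on its own. Your opening moves are sound and natural: the negation along a subsequence, the observation that $B(f^{n_k}(z),(1+\epsilon)d_{n_k})$ is disjoint from $P(f)$, and the univalent pullback of this disk along the orbit using forward invariance of $P(f)$. After that there are two problems. The quantitative step rests on the claimed ``universal lower bound'' $\rho_D(w)\geq 1/(2\operatorname{dist}(w,\partial D))$ for hyperbolic plane domains, which is false unless $D$ is simply connected: for $D=\mathbb{C}\setminus\{0,1\}$ and $w=R$ large one has $\rho_D(w)\operatorname{dist}(w,\partial D)\asymp 1/\log R\to 0$. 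Since a Fatou component of a transcendental meromorphic (or even entire) map need not be simply connected, your bound $|(f^{n_k})'(z)|\leq 2\rho_U(z)d_{n_k}$, and hence the fixed Koebe disk $B(z,r)$, is not established as stated. This particular defect is repairable without hyperbolic metrics: $\overline{B(f^{n_k}(z),d_{n_k})}$ contains a point of $\partial U_{n_k}\subset J(f)$, complete invariance of $J(f)$ together with the Koebe distortion theorem on the $(1+\epsilon)$-disk then gives $|(f^{n_k})'(z)|\leq C(\epsilon)\,d_{n_k}/\operatorname{dist}(z,\partial U)$, which is what you actually need there.

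The more serious issue is that no contradiction is ever derived, as you acknowledge (``where I expect the real work to sit''). The facts you assemble --- $f^{n_k}$ univalent on a fixed disk $B(z,r)$, with image contained in $B(f^{n_k}(z),(1+\epsilon)d_{n_k})$, while $\{f^n\}$ is normal on $U$ --- are mutually consistent and cannot clash by themselves: in an escaping wandering domain or a Baker domain the iterates can be univalent with bounded distortion on a fixed disk around $z$, tend locally uniformly to $\infty$, and send that disk into disks of radius comparable to $d_{n_k}\to\infty$; Koebe, Schwarz--Pick and normality are all satisfied simultaneously. (Note also that constancy of subsequential limits is a feature of wandering domains, not of arbitrary Fatou components, which is the generality of the theorem.) So the decisive mechanism of the proof --- in the original paper it comes from finer hyperbolic-metric estimates relating the dynamics to the complement of $P(f)$ along the orbit, not merely from univalence on a fixed disk --- is exactly the part your sketch defers, and as it stands the proposal is a reasonable set-up rather than a proof.
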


\section{Properties of functions in \textbf{$\mathcal{F}_p$}}
Sixsmith studied in \cite{dave2} the class $\mathcal{E}_p$ of transcendental entire functions defined for $p \geq 3$ as
\begin{align*}
\mathcal{E}_p = \left\{ f: f(z) = \sum_{k=0}^{p-1} a_k \exp ( \omega_p^k z), \text{ where } a_p \in \mig^* \text{ for } k \in \{0, 1 ,\ldots p-1 \} \right\},
\end{align*}
where $\omega_p = \exp (2 \pi /p)$ is an $n$th root of unity. We will restrict our studies to the family $\mathcal{E}_p$, $p \geq 3$, where $a_i \in \pra$ and $a_i = a_j$ for all $ i,j \in \{0, 1 ,\ldots p-1 \}$; that is, the family $\mathcal{F}_p$ as defined in Section 1. The reason for the restriction is that we have strong control over points in $P(f)$ for $\mathcal{F}_p$ (as will be seen in results quoted in this section from \cite{cbsw}), which we lack for the larger class $\mathcal{E}_p$. This control is essential in order to apply \cite[Theorem B]{nuria}.

We will also quote some results from  \cite{dave2} in this section, sometimes modifying them for our purposes. Since $\mathcal{F}_p \subset \mathcal{E}_p$, all the results proven for $\mathcal{E}_p$ will hold for $\mathcal{F}_p$ as well. The first one of these is (\cite[Theorem 1.2]{dave2}):
\begin{theorem} \label{sw}
Suppose that $f \in \mathcal{E}_p$, $p \geq 3$. Then each of 
\begin{align*}
A_R(f), A(f), I(f), J(f) \cap A_R(f), J(f) \cap A(f), J(f)\cap I(f), \text{ and } J(f)
\end{align*}
is a spider's web, where $R>0$ is such that $M(r,f)>r$ for $r \geq R$.
\end{theorem}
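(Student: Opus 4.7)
My plan is to reduce the statement to the single claim that $A_R(f)$ is a spider's web, and then deduce the remaining six cases using general results of Rippon and Stallard from \cite{rs1}. Those results say in particular that, once $A_R(f)$ is known to be a spider's web, the enclosing domains $G_n$ of Definition \ref{swdef} can be chosen so that each boundary loop $\partial G_n$ in fact lies in $J(f) \cap A_R(f)$; from this, all seven assertions follow at once via the chains of inclusions $J(f) \cap A_R(f) \subset J(f) \cap A(f) \subset J(f) \cap I(f) \subset J(f)$ and $A_R(f) \subset A(f) \subset I(f)$, because the same sequence $(G_n)$ witnesses the spider's web property for each of the larger sets.

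To exhibit the required Jordan curves $\Gamma_n \subset A_R(f)$ surrounding $0$ with $\min_{z \in \Gamma_n}|z| \to \infty$ (a condition well known to be sufficient for $A_R(f)$ to be a spider's web), I would exploit the two structural features of $f \in \mathcal{E}_p$: the rotational symmetry $f(\omega_p z) = f(z)$, and the fact that for $p \geq 3$ the modulus $|f(z)|$ grows at least like $\exp(c|z|)$ in every direction, where $c = \cos(\pi/p) > 0$. Indeed, for any argument $\theta = \arg z$ there is some $k \in \{0, \ldots, p-1\}$ with $\mathrm{Re}(\omega_p^k z) \geq |z|\cos(\pi/p)$, so the corresponding term contributes at least $|a_k|\exp(|z|\cos(\pi/p))$ to $|f(z)|$. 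This suggests a uniform lower bound $\min_{|z|=r}|f(z)| \geq \exp(c'r)$ along a sufficiently dense sequence of radii $r_n \to \infty$; the curves $\Gamma_n := \{|z| = r_n\}$, perturbed slightly to avoid the discrete zero set of $f$ if necessary, then lie in $A_R(f)$, provided the $r_n$ grow fast enough (e.g.\ $r_{n+1} \leq \exp(c'r_n / 2)$) that the bound cascades under iteration.

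The hard part will be turning the pointwise growth into the required uniform minimum-modulus estimate on circles, given that zeros of $f$ and competition between the $p$ terms can destroy the contribution of any single summand. For a general entire function of order~$1$, the classical Wiman $\cos\pi\rho$ estimate gives no information. I would instead exploit the fact that the Phragmén-Lindelöf indicator of $f \in \mathcal{E}_p$ is $h(\theta) = \max_k \cos(\theta + 2\pi k/p)$, a strictly positive, $2\pi/p$-periodic, and piecewise smooth function of $\theta$, and appeal to Levin's theory of entire functions of completely regular growth: for such functions one has sequences of radii $r_n \to \infty$ along which $\log \min_\theta |f(r_n e^{i\theta})| \geq (1-\varepsilon) r_n \min_\theta h(\theta)$ for any prescribed $\varepsilon > 0$. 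Combined with $\log M(r,f) \asymp r$ for $f \in \mathcal{E}_p$, this would directly produce the cascade estimate needed to place $\Gamma_n$ in $A_R(f)$ and complete the argument.
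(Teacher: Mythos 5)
You should first be aware that the paper contains no proof of Theorem \ref{sw}: it is quoted verbatim from Sixsmith \cite[Theorem 1.2]{dave2}, so your attempt is really being compared with the argument in that reference. Your opening reduction is correct and is indeed the first step taken there: by the results of Rippon and Stallard \cite{rs1}, once $A_R(f)$ is known to be a spider's web, each of $A(f)$, $I(f)$, $J(f)\cap A_R(f)$, $J(f)\cap A(f)$, $J(f)\cap I(f)$ and $J(f)$ is too. Two caveats on how you phrase it: a superset of a spider's web need not be a spider's web (connectedness does not pass upwards), so you need the full Rippon--Stallard theorem rather than the chain of inclusions; and the symmetry $f(\omega_p z)=f(z)$ you invoke holds for $\mathcal{F}_p$ but not for general $f\in\mathcal{E}_p$, where the coefficients $a_k$ are arbitrary.

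The genuine gap is the step placing the circles $\Gamma_n=\{|z|=r_n\}$ inside $A_R(f)$. A bound $|f(z)|\geq e^{c'r_n}$ on $\Gamma_n$ controls only one iterate: $f(z)$ need not lie on another chosen circle, nor in any region where a minimum-modulus estimate is available; in particular it may land in one of the strips $Q_k$ close to a zero of $f$, after which the orbit is uncontrolled. So the bound does not ``cascade'', and the claim $\Gamma_n\subset A_R(f)$ is unjustified (indeed, when $F(f)\neq\varnothing$ there is no reason a large circle should avoid the Fatou set). What rescues this outline is either the Rippon--Stallard characterisation of $A_R(f)$ spiders' webs in terms of a sequence of loops each of whose images surrounds the next (then your minimum-modulus estimate, together with the winding of $f$ around its many enclosed zeros, would suffice, and the loops themselves need not lie in $A_R(f)$), or the route of \cite{dave2}: work in the region $T(\nu)$, where Lemma \ref{davel} gives $|f(z)|\geq M(\varepsilon_0|z|,f)$, an estimate that genuinely propagates at the maximum-modulus rate. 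Relatedly, the appeal to Levin's completely-regular-growth theory does not by itself yield a whole-circle minimum-modulus bound: the asymptotics hold only outside an exceptional set of disks around the zeros, and since the zeros of $f$ lie on the rays $V_k$ with moduli spaced by a bounded amount, you must still show (for example via the two-exponential asymptotics near each $V_k$, as in \cite{dave2} or \cite{cbsw}) that suitable radii $r_n$ keep $\Gamma_n$ a definite distance from all zeros. As written, both points are asserted rather than proved, and the cascade step is where the argument would fail.
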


 We give a partition of the plane induced by each of the functions in $\mathcal{E}_p$ as in \cite{dave2}. This partition allows for distinct dynamical properties in the different components. The partition is pictured in Figure 1. \\

\begin{figure}[h]
\centering
\begin{subfigure}[b]{.8\textwidth}
\begin{tikzpicture}
\draw [line width=0.8pt] (2.75316535371619,-1.9972747496061851)-- (2.75316535371619,2.0027252503938153);
\draw [line width=0.8pt] (2.75316535371619,2.0027252503938153)-- (-1.0510607114644228,3.2387932278936047);
\draw [line width=0.8pt] (-1.0510607114644228,3.2387932278936047)-- (-3.402201720634316,0.002725250393815548);
\draw [line width=0.8pt] (-3.402201720634316,0.002725250393815548)-- (-1.0510607114644244,-3.233342727105974);
\draw [line width=0.8pt] (-1.0510607114644244,-3.233342727105974)-- (2.75316535371619,-1.9972747496061851);
\draw [line width=0.8pt] (-6,0.2)-- (-3.2600367285857894,0.1983985751733357);
\draw [line width=0.8pt] (-3.252230792018594,-0.2036920243491368)-- (-6,-0.2);
\draw [line width=0.8pt] (-2.0417214023631414,-5.642652597187732)-- (-1.1935031420162177,-3.0372875409865143);
\draw [line width=0.8pt] (-0.808680090143069,-3.1541164826793517)-- (-1.6612987958450798,-5.766259394937711);
\draw [line width=0.8pt] (4.738146777781479,-3.683584888353489)-- (2.5224112211399743,-2.071779305693192);
\draw [line width=0.8pt] (2.752439010284849,-1.7418929630065727)-- (4.973260878698468,-3.3599780906035104);
\draw [line width=0.8pt] (4.970057154717891,3.3698381391561885)-- (2.7524390102848493,1.7606237162958618);
\draw [line width=0.8pt] (2.509780950460227,2.0813336301934253)-- (4.734943053800901,3.6934449369061664);
\draw [line width=0.8pt] (-1.6664825301362274,5.770025598354111)-- (-0.8213103608228155,3.163670807179586);
\draw [line width=0.8pt] (-1.2013090785834117,3.031994091810713)-- (-2.0469051366542876,5.646418800604131);
\draw (-0.49590073949914104,0.43137351827695) node[anchor=north west] {$P(\nu)$};
\draw (3.7789459447540286,3.2517304991653325) node[anchor=north west] {\tiny{$Q_0$}};
\draw (-1.9394925021909722,5.217118143203081) node[anchor=north west] {\tiny{$Q_4$}};
\draw (-5.295387698637927,0.19820006277223147) node[anchor=north west] {\tiny{$Q_3$}};
\draw (-1.59646124105624335,-3.740628772436522) node[anchor=north west] {\tiny{$Q_2$}};
\draw (3.6234969744175496,-2.5832946755320314) node[anchor=north west] {\tiny{$Q_1$}};
\draw (4.303586219639644,0.4953556692325308) node[anchor=north west] {$T_0(\nu)$};
\draw (0.9614333574053485,-3.876646621480941) node[anchor=north west] {$T_1(\nu)$};
\draw (1.2140379342021268,4.867357959946003) node[anchor=north west] {$T_4(\nu)$};
\draw (-4.362693876619053,3.1962815288288535) node[anchor=north west] {$T_3(\nu)$};
\draw (-4.576436210831712,-2.361019160700271) node[anchor=north west] {$T_2(\nu)$};
\end{tikzpicture}
\end{subfigure}\caption{The sets $P(\nu)$, $Q_k$ and $T_k(\nu)$, $0 \leq k \leq p-1$, for $p=5$.}
\end{figure}
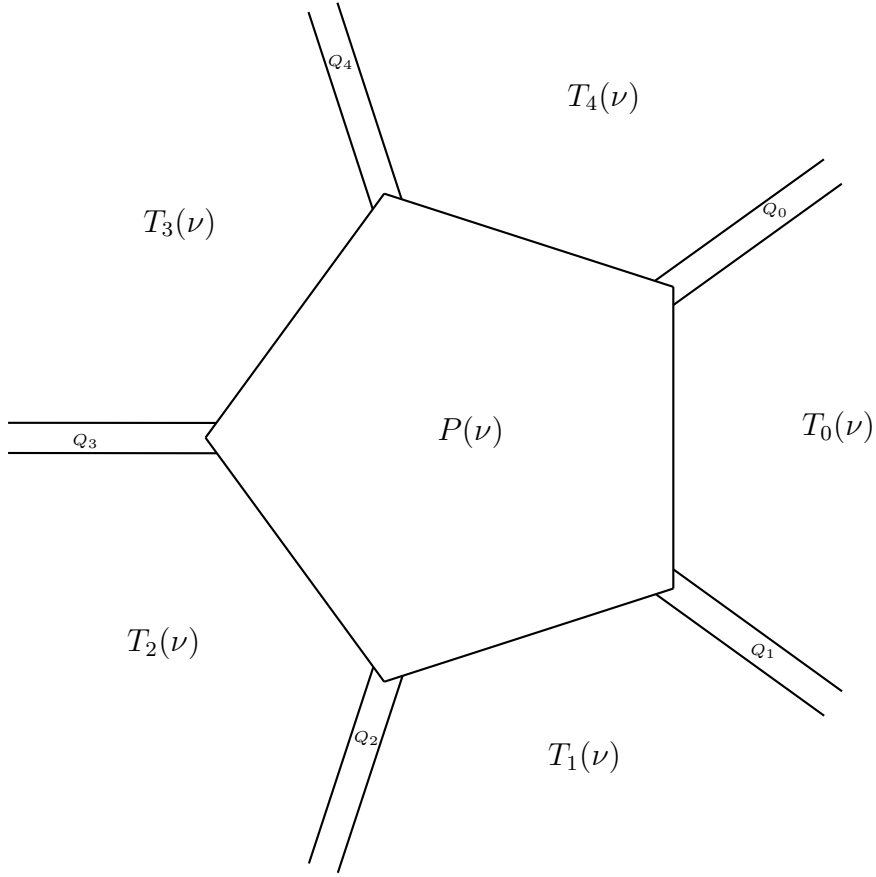
For $\nu >0$ we let $P(\nu)$ be the interior of the regular $p$-gon centered at the origin and with vertices at the points
\begin{align*}
\frac{\nu}{\cos(\pi/p)} \exp \left( \frac{(2k+1)i \pi}{p} \right), \text{ for } k \in \{0,1,\cdots,p-1\}.
\end{align*}
For some $q \geq \log (32 p)/2 \sin(\pi/p)$, define the domains
\begin{align*}
Q_k = \left\{ z \exp \left( \frac{(-2k+1)i \pi}{p} \right) : \operatorname{Re} (z) > 0, |\operatorname{Im}(z)| <  q \right\},
\end{align*}
for $k \in \{0,1,\cdots,p-1\}$.

Roughly speaking, each $Q_k$ can be obtained by rotating a half-infinite horizontal strip of width $2 q$ around the origin until a vertex of $P(\nu)$ is positioned centrally in the strip.

Set
\begin{align*}
T( \nu) = \mig \setminus \left( P(\nu) \cup \bigcup_{k=0}^{p-1} Q_k \right).
\end{align*}
The set $T(\nu)$ consists of $p$ unbounded simply connected components, which are arranged rotationally symmetrically. We label these $T_j(\nu)$, for $j \in  \{0,1,\cdots,p-1\}$, where $T_0(\nu)$ has unbounded intersection with the positive real axis, and $T_{j+1} (\nu)$ is obtained by rotating $T_j(\nu)$ clockwise around the origin by $2 \pi /p$ radians.

A key lemma \cite[Lemma 4.1]{dave2} concerns the behaviour of $f$ in each of the regions $T_j(\nu)$, $j \in \{0,1,\cdots,p-1\}$. For our purposes, we quote a small part of that lemma:
\begin{lemma} \label{davel}
Let $f \in \mathcal{E}_p$, $p \geq 3$. There exist $\nu'>0$ and $\varepsilon_0 \in (0,1)$ such that, for all $z \in T(\nu)$ with $\nu \geq \nu'$,
\begin{equation}
|f(z)| > \max\{ e^{\varepsilon_0 \nu}, M ( \varepsilon_0 |z|, f)\}. \label{eq:anisotita}
\end{equation}
\end{lemma}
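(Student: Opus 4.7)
The plan is to show that on each sector $T_j(\nu)$ a single exponential term in $f(z)=\sum_{k=0}^{p-1}a_k\exp(\omega_p^k z)$ dominates all the others by a factor of at least $32p$, after which the two stated lower bounds fall out by direct estimation. Since $T_j(\nu)=\omega_p^{-j}T_0(\nu)$, the substitution $w:=\omega_p^j z$ reduces the problem to analyzing $w\in T_0(\nu)$ with dominant term $a_j e^w$; the argument below is written as if $j=0$ but is identical for the other sectors.

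For $w=x+iy\in T_0(\nu)$, the defining conditions force $\operatorname{Re}(w)\geq\nu$, because $T_0(\nu)$ lies in the angular sector $|\arg w|<\pi/p$ and to the right of the vertical edge $\{x=\nu\}$ of $P(\nu)$. Moreover, the requirement that $w$ lies outside the adjacent strips $Q_0$ and $Q_1$ gives $x\sin(\pi/p)\mp y\cos(\pi/p)\geq q$. Combined with the identity $\operatorname{Re}(w)-\operatorname{Re}(\omega_p^k w)=2|w|\sin(\pi k/p)\sin(\arg w+\pi k/p)$, these translate immediately into $\operatorname{Re}(w)-\operatorname{Re}(\omega_p^{\pm1}w)\geq 2q\sin(\pi/p)\geq\log(32p)$. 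For the remaining indices $k\in\{2,\ldots,p-2\}$, the angular constraint $|\arg w|<\pi/p$ yields $\sin(\arg w+\pi k/p)\geq\sin(\pi/p)$ and $\sin(\pi k/p)\geq\sin(2\pi/p)$, so $\operatorname{Re}(w)-\operatorname{Re}(\omega_p^k w)\geq 2\nu\sin(\pi/p)\sin(2\pi/p)$, which exceeds $\log(32p)$ once $\nu$ is chosen large enough in terms of $p$.

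Consequently $|\exp(\omega_p^k w)|\leq|\exp(w)|/(32p)$ for every $k\neq 0$, and the triangle inequality gives $|f(z)|\geq \bigl(|a_0|-\tfrac{(p-1)\max_k|a_k|}{32p}\bigr)|\exp(w)|$. After possibly enlarging $q$ (and hence $\nu'$) to absorb the ratio $\max_k|a_k|/\min_k|a_k|$ into the $32p$ factor, we obtain $|f(z)|\geq D\,e^{\operatorname{Re}(w)}\geq D\,e^\nu$ for some $D=D(f)>0$. The first inequality $|f(z)|>e^{\varepsilon_0\nu}$ then holds for any fixed $\varepsilon_0\in(0,1)$ once $\nu\geq\nu'$ is sufficiently large. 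For the second, the trivial bound $M(r,f)\leq p\max_k|a_k|\,e^r$ combined with $\operatorname{Re}(w)\geq|w|\cos(\pi/p)=|z|\cos(\pi/p)$ yields $|f(z)|\geq D\,e^{|z|\cos(\pi/p)}>M(\varepsilon_0|z|,f)$ provided $\varepsilon_0<\cos(\pi/p)$ and $|z|\geq\nu$ is large enough. Setting $\varepsilon_0:=\tfrac{1}{2}\cos(\pi/p)$ and $\nu'$ large enough to satisfy all the requirements simultaneously completes the argument.

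The main obstacle is the uniform control of the mid-range terms $k\in\{2,\ldots,p-2\}$: unlike the extremes $k=\pm1$, the gap $\operatorname{Re}(w)-\operatorname{Re}(\omega_p^k w)$ is not controlled by the strip width $q$ alone but requires the global lower bound $|w|\geq\nu$, which is precisely why the conclusion is valid only for $\nu\geq\nu'$ and why $\nu'$ must be chosen in terms of the coefficients of $f$.
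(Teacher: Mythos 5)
There is no in-paper proof to compare against: the paper quotes this statement from Sixsmith \cite[Lemma 4.1]{dave2} and uses it as a black box. Your argument is essentially a reconstruction of the standard proof behind that citation — rotate $T_j(\nu)$ onto $T_0(\nu)$, show the single term $a_je^{w}$ dominates the other $p-1$ exponentials, then read off both bounds from $\operatorname{Re} w\geq\nu$ and $\operatorname{Re} w\geq|z|\cos(\pi/p)$ — and the estimates are sound: the half-plane inequalities $x\sin(\pi/p)\mp y\cos(\pi/p)\geq q$ coming from avoiding $Q_0,Q_1$ do give the gap $2q\sin(\pi/p)\geq\log(32p)$ for $k=\pm1$, and the lower bound $|w|\geq\nu$ handles the middle indices, whose gap grows linearly in $\nu$. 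One caveat deserves attention. In the paper's setup $q$ is fixed once, subject only to $q\geq\log(32p)/(2\sin(\pi/p))$, and near the boundary rays of $Q_0,Q_1$ the suppression of the $k=\pm1$ terms is exactly the factor $1/(32p)$ no matter how large $\nu$ is (that gap does not improve with $\nu$). Hence for a general $f\in\mathcal{E}_p$ with very unbalanced coefficients ($|a_{j\pm1}|\gg|a_j|$) your triangle-inequality step can fail with the stated $q$, and your fix of "enlarging $q$" is genuinely needed; but enlarging $q$ shrinks the strips' complement, so strictly speaking you then prove the inequality only on the $T(\nu)$ attached to the new $q$, not the one originally defined. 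You should either say explicitly that $q$ is chosen depending on the coefficient ratio of $f$ (as in Sixsmith's formulation) or restrict to comparable coefficients. For the family $\mathcal{F}_p$ actually used in this paper all $a_k$ are equal, so no enlargement is required and your proof goes through as written; the remaining geometric assertions (that $T_0(\nu)$ lies in $\{\operatorname{Re} w\geq\nu,\ |\arg w|<\pi/p\}$ and on the far side of the two strip boundaries) are stated rather than proved, but they are correct and easy to justify from the definitions of $P(\nu)$ and $Q_k$.
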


We proved the following result in \cite[Lemma 2.5]{cbsw}.
\begin{lemma}\label{g} Let $f \in \mathcal{F}_p$, $p \geq 3$, and let $\nu'$ be as in Lemma \ref{davel}. Let $z \in \mig$ be such that $f^n(z) \in T(\nu')$ for all $n \geq 1$. Then $z \in J(f) \cap A(f)$.
\end{lemma}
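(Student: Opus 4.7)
The plan is to treat the conclusions $z \in A(f)$ and $z \in J(f)$ in turn, both by iterating Lemma \ref{davel}.

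For $z \in A(f)$: since $f^n(z) \in T(\nu')$ for all $n \geq 1$, Lemma \ref{davel} gives
\[
|f^{n+1}(z)| > M(\varepsilon_0 |f^n(z)|, f),
\]
and the transcendence of $f$, which makes $M(\varepsilon_0 r, f)/r \to \infty$, forces $|f^n(z)| \to \infty$ by iteration. Once $|f^n(z)|$ is large, $f^n(z)$ lies in some $T_j(\nu)$ for every $\nu$ up to a constant multiple of $|f^n(z)|$; in particular, with $\nu := |f^n(z)| \cos(\pi/p)/2$, we have $\nu \geq \nu'$ and $f^n(z) \in T(\nu)$. The first bound of Lemma \ref{davel} then yields
\[
|f^{n+1}(z)| > e^{\alpha |f^n(z)|}, \qquad \alpha := \varepsilon_0 \cos(\pi/p)/2 > 0.
\]
Every $f \in \mathcal{F}_p$ satisfies $\log M(r, f) = r + O(1)$ as $r \to \infty$ (a sum of exponentials has order one and positive type). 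I set up an induction with a multiplicative buffer $C := 2/\alpha$, aiming at $|f^{n+L}(z)| \geq C \, M^n(R, f)$ for all $n \geq 0$. The inductive step reduces to $e^{\alpha C M^n(R, f)} \geq C \cdot M^{n+1}(R, f)$, i.e.\ $(\alpha C - 1) M^n(R, f) \geq \log C + O(1)$, which holds provided $R$ is large enough; the base case is met by choosing $L$ so that $|f^L(z)| \geq CR$, which is possible because $|f^n(z)| \to \infty$. Hence $z \in A_R(f) \subset A(f)$.

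For $z \in J(f)$: one uses that $P(f)$ is bounded for $f \in \mathcal{F}_p$, in fact $P(f) \subset P(\nu_0)$ for some $\nu_0 > 0$, as is established elsewhere in \cite{cbsw}. Taking $\nu' \geq \nu_0$, the orbit of $z$ stays away from $P(f)$ and from the critical points of $f$ (which lie in the strips $\bigcup Q_k$ for $\nu'$ large enough). If $z$ were in a Fatou component $U$, then $f^n|_U \to \infty$ locally uniformly, so normality of $\{1/f^n\}$ yields the estimate $|(f^n)'(z)| = O(|f^n(z)|^2)$. Iterating Koebe's distortion theorem along the critical-point-free orbit, combined with the strong lower derivative bounds on $T(\nu')$ provided by the estimates in \cite{dave2}, gives a lower bound on $|(f^n)'(z)|$ that is incompatible with this upper bound, forcing $z \in J(f)$.

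The main obstacle is the fast escape induction: absorbing the $\varepsilon_0 < 1$ factor through the multiplicative buffer $C$ relies on the sharp asymptotic $\log M(r, f) = r + O(1)$ that is available for the order-one family $\mathcal{F}_p$. The Julia step is also delicate and depends on combining the geometric information about $P(f)$ and the location of critical points that is specific to $f \in \mathcal{F}_p$.
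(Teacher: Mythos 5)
The paper itself does not prove this lemma---it is quoted from \cite[Lemma 2.5]{cbsw}---so I can only assess your argument on its own terms. Your treatment of the $A(f)$ half is essentially the right idea and surely close to the source: iterate the expansion estimate of Lemma \ref{davel}, using the observation that a point of $T(\nu')$ of modulus $r$ also lies in $T(\nu)$ for $\nu$ comparable to $r$, together with $\log M(r,f)=r+O(1)$; the buffer induction you set up does close. One caveat: your opening claim that ``$M(\varepsilon_0 r,f)/r\to\infty$ forces $|f^n(z)|\to\infty$ by iteration'' is not automatic. Since $\varepsilon_0<1$, the inequality $M(\varepsilon_0 r,f)>r$ holds only above some threshold, and the statement of Lemma \ref{davel} as quoted gives no relation between $e^{\varepsilon_0\nu'}$, $M(\varepsilon_0\nu',f)$ and that threshold; a priori the moduli could oscillate in a bounded range. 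You need an extra step (or the fuller form of \cite[Lemma 4.1]{dave2}) to push the orbit above the range where the expansion beats the identity. This is a patchable, quantitative gap.

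The $J(f)$ half, however, has a genuine gap. It rests on the claim that $P(f)$ is bounded, indeed $P(f)\subset P(\nu_0)$, ``established elsewhere in \cite{cbsw}''. No such result is available, and the claim is false within the scope of the lemma: Corollary \ref{corpf} gives only $P(f)\subset\pra$, and for $p$ even and $|\lambda|\ge 1$ the present paper proves $f(x)>|x|$ on $\pra$, so $\pra\subset I(f)$ and the (real) critical values escape to $+\infty$; hence $P(f)$ is unbounded, and its forward orbits run out along the positive real axis, which has unbounded intersection with $T_0(\nu')$. So the orbit of $z$ need not stay away from $P(f)$ at all, and the Eremenko--Lyubich-style expansion/Koebe scheme you sketch cannot start. (Nor may you ``take $\nu'\ge\nu_0$'': $\nu'$ is fixed by the hypothesis, and enlarging it shrinks $T(\nu')$, so the hypothesis is not preserved.) Further ingredients you invoke are also missing: Lemma \ref{davel} contains no derivative estimates, and the normality bound $|(f^n)'(z)|=O(|f^n(z)|^2)$ does not follow from Cauchy estimates alone (they give a bound involving $\sup_D 1/|f^n|$, which must be related to $1/|f^n(z)|$ by a separate argument). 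Note also that $S(f)$ is unbounded for these functions---this is precisely what allows $A_R(f)$ to be a spider's web---so the class-$\mathcal{B}$ theorem $I(f)\subset J(f)$ of \cite{erl} is unavailable, and membership in $A(f)$ alone does not force membership in $J(f)$, since fast escaping wandering domains exist. The $J(f)$ conclusion therefore remains unproved in your proposal and requires a different argument, specific to the geometry of this family, as in \cite[Lemma 2.5]{cbsw}.
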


We want to locate the zeros and the critical points of $f \in \mathcal{F}_p$, $p \geq 3$. To that end, we define rays that lie inside the strips $Q_k$.
\begin{definition}
Let $V_0:=\{x+iy \in \mig: y = \tan(\pi/p) x,  x>0\}$ and let its $2 k \pi/p$-rotations clockwise around the origin for $k=1, \ldots, p-1$ be $V_1, \ldots, V_{p-1}$. 
\end{definition}
We proved the following for these rays \cite[Theorems 3.2 and 3.6]{cbsw}.
\begin{lemma} \label{cvs} Let $f \in \mathcal{F}_p$, $p \geq 3$. The following hold.
\begin{itemize} 
\item All zeros of $f$ lie in $\cup_{k=0,\ldots,p-1} V_k$.
\item $CP(f) \subset \cup_{k=0,\ldots,p-1} V_k$, and critical points are separated in each $V_k$ from each other by the zeros of $f$.
\item  $CV(f) \subset \pra$.
\end{itemize}
\end{lemma}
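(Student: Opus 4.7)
The lemma rests on two symmetries of $f = f_\lambda \in \mathcal{F}_p$ that follow from the coefficients $a_k = \lambda$ being real and independent of $k$: the rotational identity $f(\omega_p z) = f(z)$ (reindex the sum by $k \mapsto k+1$) and the conjugation identity $f(\bar z) = \overline{f(z)}$ (reindex by $k \mapsto -k$). Composing them gives, for each $k$, $f(\omega_p^{1-2k}\bar z) = \overline{f(z)}$, and the fixed-point set of the antiholomorphic involution $z \mapsto \omega_p^{1-2k}\bar z$ is precisely the ray $V_k$. Hence $f|_{V_k}$ is real-valued, from which the third bullet follows immediately: any critical point $c \in V_k$ has $f(c) \in \pra$.

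I would prove the first bullet using the argument principle. By the $p$-fold rotational symmetry it suffices to locate the zeros in the closed fundamental sector $\overline{\Sigma} = \{re^{i\theta}: 0 \leq \theta \leq \pi/p\}$, bounded by the positive real axis and by $V_0$. On the positive real axis $f$ is nonvanishing: for $|z|$ sufficiently large this follows from Lemma \ref{davel}, since the positive real axis eventually lies in $T_0(\nu)$; for $|z|$ small one checks directly that $f(x)$ is dominated by the real positive term $\lambda e^x$, the other summands pairing into real terms of strictly smaller exponential growth. On $V_0$ the restriction $F(t) := f(te^{i\pi/p})$ is a real-analytic sum of exponential-times-cosine functions, dominated for large $t$ by the summand $2\lambda\, e^{t\cos(\pi/p)}\cos(t\sin(\pi/p))$, which yields an asymptotic count of sign changes of $F$ on $(0,R)$. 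Truncating $\Sigma$ at $|z| < R$ and applying the argument principle, the positive-real-axis contribution to $\Delta\arg f$ is zero, the $V_0$-contribution equals $\pi$ times the number of sign changes of $F$ on $(0,R)$, and the outer arc contribution is controlled by Lemma \ref{davel}; equating the total with $2\pi N$, where $N$ is the interior zero count, forces $N$ to equal exactly the zero count on $V_0$, so no zero lies in the open sector. Letting $R \to \infty$ and applying rotational symmetry completes the first bullet.

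The second bullet follows by applying the same argument-principle strategy to $f'$, which obeys $f'(\omega_p z) = \omega_p^{-1} f'(z)$ and $f'(\bar z) = \overline{f'(z)}$ and hence is real-valued on $V_k$ up to a constant unimodular factor coming from the chain rule. The alternation of zeros and critical points on each $V_k$ then follows from Rolle's theorem applied to $F$ (between any two zeros of $F$ there is a critical point of $F$, and critical points of $F$ on $(0,\infty)$ correspond bijectively to critical points of $f$ on $V_0$) combined with an asymptotic density comparison: both zeros and critical points of $F$ are governed for large $t$ by the dominant term $e^{t\cos(\pi/p)}\cos(t\sin(\pi/p))$, whose zeros and critical points strictly interleave with identical spacing $\pi/\sin(\pi/p)$.

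The main obstacle I anticipate is the arc estimate for the argument principle: Lemma \ref{davel} gives a lower bound on $|f|$ along the outer arc, but converting this into a tight bound on $\Delta\arg f$ around the arc requires analyzing how the phases of the $p$ exponential summands of $f$ interact as $\arg z$ varies over $(0,\pi/p)$, so as to ensure the arc contribution to $\Delta\arg f$ agrees with the dominant-term contribution modulo $2\pi$. A secondary technical point is the verification of the alternation for small values of $t$, where the asymptotic analysis does not apply and one instead has to check directly that the finitely many small-$t$ zeros and critical points still interleave on each $V_k$.
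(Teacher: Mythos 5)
Your symmetry observations are correct and are the right starting point: $f(\omega_p z)=f(z)$ and $f(\bar z)=\overline{f(z)}$ do hold for real $\lambda$, they force $f$ to be real-valued on the lines containing the rays $V_k$, and the third bullet does follow once the second is known. Note, however, that this paper does not prove the lemma at all; it is quoted from \cite{cbsw} (Theorems 3.2 and 3.6), and the machinery there includes the root-of-unity filter identity $f(z)=\lambda p\sum_{m\ge 0} z^{pm}/(pm)!$, which this paper itself quotes later. Under $w=z^p$ that identity turns the first bullet into the statement that $\sum_{m\ge 0} w^m/(pm)!$ has only negative real zeros, and the interlacing statement into a one-variable interlacing problem on $(-\infty,0)$ --- a rather different, and cleaner, reduction than your sector contour.

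As it stands your argument-principle programme has genuine gaps, and they sit exactly at the points you flag. First, the zeros of $f$ accumulate along $V_0$, i.e.\ inside the strip $Q_0$, and Lemma \ref{davel} controls $f$ only on $T(\nu)$; so ``the outer arc contribution is controlled by Lemma \ref{davel}'' fails precisely on the portion of the arc where all the action is, and what you actually need are asymptotics of $f$ inside $Q_0$ (domination by the pair $e^{ze^{i\pi/p}}+e^{ze^{-i\pi/p}}$ with explicit error), which is the real content of the localisation and is nowhere established. Second, your contour runs along $V_0$ through the very zeros being counted, so the argument principle requires indentations, and the bookkeeping ``the $V_0$-contribution equals $\pi$ times the number of sign changes, hence $N$ equals the zero count on $V_0$'' is not a proof: you never produce an independent count of the total number of zeros in the truncated sector against which to compare, so nothing as written forces the open sector to be zero-free. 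Third, for the second bullet Rolle gives a critical point between consecutive zeros, which is the opposite of the claimed separation; you need that each such gap contains \emph{exactly one} zero of $f'$, i.e.\ an exact count of critical points, and the asymptotic interleaving with spacing $\pi/\sin(\pi/p)$ does not cover all of $(0,\infty)$ (as you concede). Finally, observe that $f'(0)=\lambda\sum_{k=0}^{p-1}\omega_p^k=0$, so the origin is a critical point of multiplicity $p-1$ sitting at the corner of every sector; your contour argument for $f'$ must account for it, and it also shows that the localisation of $CP(f)$ needs care at $0$, a point your sketch does not address.
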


The next result on the location of the postsingular set for functions in $\mathcal{F}_p$ follows easily from the results above.

\begin{cor} Let $f \in \mathcal{F}_p$, $p \geq 3$. Then $\overline{P(f)} \subset \pra$. \label{corpf}
\end{cor}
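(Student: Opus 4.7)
The plan is to establish two facts and combine them: (a) $f_\lambda$ maps $\pra$ into itself, and (b) $S(f) \subset \pra$. Once these hold, an induction on $n$ yields $f^n(S(f)) \subset \pra$ for every $n \geq 0$, so $\cup_{n \geq 0} f^n(S(f)) \subset \pra$; closedness of $\pra$ then gives $\overline{P(f)} = P(f) \subset \pra$, which is what is claimed.

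For (a), I would check the conjugation symmetry directly. For $x \in \pra$, the map $k \mapsto p-k \pmod{p}$ is a bijection on $\{0, 1, \ldots, p-1\}$ sending $\omega_p^k x$ to $\overline{\omega_p^k x}$; using $\lambda \in \pra$,
\begin{equation*}
\overline{f_\lambda(x)} = \lambda \sum_{k=0}^{p-1} \exp(\omega_p^{p-k} x) = f_\lambda(x),
\end{equation*}
so $f_\lambda(x) \in \pra$.

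For (b), recall $S(f) = \overline{CV(f) \cup AV(f)}$. Lemma \ref{cvs} gives $CV(f) \subset \pra$, so it only remains to control $AV(f)$. Lemma \ref{davel} forces $|f(z)| \to \infty$ on $T(\nu)$, so no asymptotic curve $\gamma(t) \to \infty$ producing a finite limit can remain in $T(\nu)$; since $P(\nu)$ is bounded, any such curve must eventually be trapped in $\cup_{k} Q_k$. In each strip $Q_k$ only a small set of exponentials dominates, and the $p$-fold rotational symmetry $f_\lambda(\omega_p z) = f_\lambda(z)$ combined with the conjugation symmetry from (a) forces any such finite limit to be real; alternatively, one can appeal to the tract analysis in \cite{dave2} to conclude that $AV(f) \subset \{0\} \subset \pra$. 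Taking closure yields $S(f) \subset \pra$.

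The main obstacle is the asymptotic-value analysis in (b); everything else reduces to an algebraic symmetry check and a routine inductive argument with closure. Critical values are handled at once by Lemma \ref{cvs}, but ruling out non-real asymptotic values requires studying the behavior of $f$ along arbitrary curves escaping to infinity whose $f$-image has a finite limit, and it is here that the strip structure of $\cup_k Q_k$ together with the symmetries of $f_\lambda$ must be brought to bear.
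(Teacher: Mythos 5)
Your skeleton is the same as the paper's in most respects: $CV(f) \subset \pra$ from Lemma \ref{cvs}, $f$ real on $\pra$ (your conjugation computation is a correct way to see this), and then induction plus closedness of $\pra$ gives $\overline{P(f)} \subset \pra$. All of that is sound.

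The genuine gap is your step (b) for asymptotic values, which you yourself flag as the main obstacle but do not actually close. The mechanism you sketch does not work: the symmetries $f_\lambda(\omega_p z) = f_\lambda(z)$ and $\overline{f_\lambda(\bar z)} = f_\lambda(z)$ only show that $AV(f)$ is invariant under complex conjugation (and that asymptotic curves can be rotated between the strips $Q_k$ without changing the limit value); invariance of the \emph{set} $AV(f)$ under conjugation does not force any individual asymptotic value to lie on $\pra$, so ``the symmetry forces any such finite limit to be real'' is a non sequitur. Likewise, ``appeal to the tract analysis in \cite{dave2}'' is not a proof and does not correspond to a stated result giving $AV(f) \subset \{0\}$ or $AV(f) = \varnothing$; controlling $f$ along arbitrary escaping curves trapped in $\cup_k Q_k$ is exactly the hard part, and you have not done it. The paper avoids this entirely with a one-line citation: by Theorem \ref{sw}, $A_R(f)$ is a spider's web, and by \cite[Theorem 1.8]{rs1} an entire function whose set $A_R(f)$ is a spider's web has no finite asymptotic values at all. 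With that, $S(f) = \overline{CV(f)} \subset \pra$ and the rest of your argument goes through; without it (or an equally rigorous substitute), your proof of the corollary is incomplete.
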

\begin{proof}
From Theorem \ref{sw}, $A_R(f)$ is a spider's web, so $f$ has no asymptotic values \cite[Theorem 1.8]{rs1}. From Lemma \ref{cvs}, all the critical values of $f$ lie in $\pra$. But $f$ is real on $\pra$, so $f^n(CV(f)) \subset \pra$ for all $n \in \fis$. Therefore $\overline{P(f)} \subset \pra$.
\end{proof}

Further, in addition to $J(f)$ being a spider's web, we proved that it actually contains a Cantor bouquet \cite[Theorem 1.2]{cbsw}. 
\begin{theorem} \label{cb}
Let $f \in \mathcal{F}_p$, $p \geq 3$. Then $J(f)$ is a spider's web that contains a Cantor bouquet.
\end{theorem}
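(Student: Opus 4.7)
My plan is to construct a Cantor bouquet inside $J(f)$ by building a family of disjoint curves to infinity indexed by symbol sequences, using the sectors $T_0(\nu), \ldots, T_{p-1}(\nu)$ as expanding tracts under $f$. Since Theorem \ref{sw} already gives that $J(f)$ is a spider's web, only the Cantor bouquet part requires work.

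First I would fix $\nu \geq \nu'$, with $\nu'$ as in Lemma \ref{davel}, and, for each one-sided sequence $\underline{s} = (s_0, s_1, \ldots) \in \{0, 1, \ldots, p-1\}^{\fis}$, consider
\[
J_{\underline{s}} := \bigl\{z \in \mig : f^n(z) \in \overline{T_{s_n}(\nu)} \text{ for all } n \geq 0\bigr\}.
\]
By Lemma \ref{g}, any $z$ whose iterates all stay in $T(\nu')$ lies in $J(f) \cap A(f)$, so $J_{\underline{s}} \subset J(f) \cap A(f)$. In particular, any curves we find in $J_{\underline{s}}$ will automatically lie in $J(f)$.

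The core step is to show that each nonempty $J_{\underline{s}}$ is a curve from a finite endpoint to infinity, via the classical Devaney--Krych-style hair construction. I would pass to logarithmic coordinates, use the estimate $|f(z)| > M(\varepsilon_0 |z|, f)$ from Lemma \ref{davel} to verify that the inverse branches of $f$ mapping $T_{s_{n+1}}(\nu)$ into $T_{s_n}(\nu)$ are uniformly contracting, and then take nested intersections of preimages of suitable reference arcs. Parametrising by escape rate would then produce a curve $\gamma_{\underline{s}}$ with $|\gamma_{\underline{s}}(t)| \to \infty$, landing in $J_{\underline{s}}$.

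Finally, to promote $\{\gamma_{\underline{s}}\}$ to a Cantor bouquet I would verify the Aarts--Oversteegen straight-brush axioms: pairwise disjointness of hairs (distinct itineraries yield disjoint sets), continuous dependence of $\gamma_{\underline{s}}$ on $\underline{s}$ in the product topology (from the uniform contraction), and a totally disconnected endpoint set inherited from the symbol space $\{0, 1, \ldots, p-1\}^{\fis}$. The main obstacle is the uniform-contraction estimate for the inverse branches, since the sectors $T_j(\nu)$ are not classical logarithmic tracts; one has to choose an appropriate metric and be careful near $\partial T(\nu)$ so that the Devaney--Krych argument genuinely applies. The rotational symmetry of $f$, its finite order, and the strong growth from Lemma \ref{davel} make this feasible but technical.
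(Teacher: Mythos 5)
You should first note that this paper does not actually prove Theorem \ref{cb}: it is quoted verbatim from \cite[Theorem 1.2]{cbsw}, so there is no internal argument to match, and the spider's web half is already Theorem \ref{sw}. Your task is therefore to supply the content of \cite{cbsw}, and as it stands your sketch has a genuine gap at its central step. With only $p$ symbols, one per tract $T_{s_n}(\nu)$, the set $J_{\underline{s}}$ is \emph{not} a single hair. In each tract $T_j(\nu)$ the function is dominated by a single exponential term (in $T_0(\nu)$ one has $f(z)\approx e^{z}$, cf.\ the estimate used in Lemma \ref{sr}), so each tract contains infinitely many fundamental domains of that exponential, and a fixed sector-itinerary class is typically an uncountable union of hairs. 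To get individual curves you must refine the symbolic dynamics to countably many symbols per step, e.g.\ by intersecting the tracts with the height-$2\pi$ strips $R(k)$ (this is exactly the granularity at which Lemma \ref{curveswd}, itself imported from \cite{cbsw}, produces its curves $\gamma_{\pm k}$). Relatedly, the ``uniform contraction of inverse branches'' you invoke does not follow from Lemma \ref{davel}, which controls $|f(z)|$ but says nothing about $|f'(z)|$; expansion has to be extracted from the closeness of $f$ to the dominant exponential (or via logarithmic coordinates with a suitable hyperbolic metric), and this is precisely where the work lies.

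The second gap is in the final step. A Cantor bouquet is a set \emph{ambiently homeomorphic} to a straight brush, and the Aarts--Oversteegen axioms are hairiness, density of the relevant projection, and closedness of the brush --- not the list you give (pairwise disjointness, continuous dependence on the itinerary, totally disconnected endpoints). Producing a family of disjoint hairs depending continuously on a Cantor set of itineraries does not by itself yield the ambient homeomorphism; in practice one either verifies the brush axioms for an explicit model or, more commonly, invokes a criterion in the spirit of Barański--Jarque--Rempe for exponential-like tract maps (e.g.\ via a disjoint-type restriction of $f$ to a subtract of $T_0(\nu)$). Your proposal acknowledges that the tracts are ``not classical logarithmic tracts'' but does not say how this is overcome, and that, together with the coarse itineraries above, is where the proof is still missing rather than merely technical.
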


We now prove an elementary result for the functions $f \in \mathcal{F}_p$, $p \geq 3$, for even $p$, which is a consequence of \cite[Lemma 4.1]{dave2}. 

\begin{lemma} \label{sr}
Let $f \in \mathcal{F}_p$, $p \geq 3$, where $p$ is even, and let $\nu'$ be as in Lemma \ref{davel}. There exists $r_0 >0$ such that if
\begin{align*}
S_r = \{ x+iy : |x| \geq r, |y| \leq \pi /2 p\},
\end{align*}
then, for all $r \geq r_0$,  $|f(z)|>|z|$ for all $z \in S_r$ and $f(S_r) \subset T_0(\nu')$.
\end{lemma}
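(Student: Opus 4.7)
The plan is to reduce the lemma to a direct estimate on the right half-strip $S_r^+ := \{x+iy : x \geq r,\ |y| \leq \pi/(2p)\}$ and then transfer the conclusion to the left half-strip $S_r^- := \{x+iy : x \leq -r,\ |y| \leq \pi/(2p)\}$ via the evenness $f_\lambda(-z) = f_\lambda(z)$. This evenness holds precisely because $p$ is even: then $\omega_p^{p/2} = -1$, the set $\{\omega_p^k\}_{k=0}^{p-1}$ is invariant under negation, and so is the defining sum. On $S_r^+$ the $k=0$ summand $\lambda e^z$ will dominate all the others, and this single fact drives both conclusions.

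Specifically, for $z = x + iy \in S_r^+$ and $k \in \{1, \ldots, p-1\}$,
\[
|\exp(\omega_p^k z)| = \exp\bigl(\cos(2\pi k/p)\,x - \sin(2\pi k/p)\,y\bigr) \leq \exp\bigl(\cos(2\pi/p)\,x + \pi/(2p)\bigr),
\]
since $\cos(2\pi k/p) \leq \cos(2\pi/p) < 1$ for $k \neq 0$ and $|y| \leq \pi/(2p)$. Summing over $k \neq 0$ and comparing with $|\lambda e^z| = |\lambda|e^x$ gives $f_\lambda(z) = \lambda e^z(1+\varepsilon(z))$ with $\varepsilon(z) \to 0$ uniformly as $x \to \infty$, whence $|f_\lambda(z)| \geq \tfrac{|\lambda|}{2}e^x > x + \pi/(2p) \geq |z|$ for $r$ sufficiently large (depending on $|\lambda|$).

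For the containment $f_\lambda(S_r^+) \subset T_0(\nu')$, I would track the argument: assuming $\lambda > 0$, $\arg f_\lambda(z)$ stays within $o(1)$ of $y \in [-\pi/(2p),\pi/(2p)]$, which is well inside the open cone $(-\pi/p,\pi/p)$ between the two vertex directions of $P(\nu')$ nearest the positive real axis. Since each half-strip $Q_k$ has fixed transverse width $2q$, at modulus $R = |f_\lambda(z)|$ it is contained in an angular neighbourhood of width $O(1/R)$ about its center-line direction; for $r_0$ large enough these neighbourhoods are disjoint from the range of $\arg f_\lambda(z)$, and $|f_\lambda(z)|$ also exceeds the diameter of $P(\nu')$, so $f_\lambda(z) \in T_0(\nu')$. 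For $z \in S_r^-$, applying this to $-z \in S_r^+$ and using $f_\lambda(z) = f_\lambda(-z)$ delivers both conclusions at once.

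The main obstacle is the directional estimate of the previous paragraph: the growth $|f_\lambda(z)| > |z|$ alone would also follow from observing that $S_r \subset T(\nu')$ for $r$ large and applying Lemma \ref{davel}, but locating the image precisely in $T_0(\nu')$ rather than merely outside a compact set requires the argument-tracking above. One caveat: for $\lambda < 0$ the dominant term $\lambda e^z$ points near the negative real axis, so the image in fact lies in $T_{p/2}(\nu')$ (the sector containing the negative real axis) rather than $T_0(\nu')$; by the rotational symmetry of the partition this substitution is harmless for the applications of the lemma in Section 4.
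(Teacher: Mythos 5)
Your proof is correct and takes essentially the same route as the paper: the paper also rests on the dominance of the single exponential term on $S_r$ (quoting \cite[Lemma 4.1]{dave2} to get $f(z)\in B(e^z,\epsilon(r))$ with $\epsilon(r)\to 0$) and then reads off $|f(z)|>|z|$ and, by tracking the argument against the partition, $f(S_r)\subset T_0(\nu')$; you simply prove the dominant-term estimate directly instead of citing it, and make explicit the use of evenness for the left half-strip and the angular argument excluding $Q_0$, $Q_1$ and $P(\nu')$, both of which the paper leaves implicit. Your caveat about $\lambda<0$ is well taken: the paper's proof tacitly replaces $\lambda e^z$ by $e^z$, and for negative $\lambda$ the image lies near the negative real axis, hence in $T_{p/2}(\nu')$ rather than $T_0(\nu')$, so the statement as written needs either the restriction $\lambda>0$ or the symmetric reformulation you indicate; as you note, this does not affect the applications in Section 4.
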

\begin{proof}
Let $z \in S_r$. From  \cite[Lemma 4.1]{dave2}, there exists $\epsilon(r)$ with $\epsilon (r) \to 0$ as $r \to \infty$, such that $f(z) \in B ( e^z, \epsilon(r))$; observe that this implies that $|f(z)|>|z|$ for all $z \in S_r$ for large enough $r$.

 Recall that $T_0(\nu')$ is part of the angle $\{ t e^{i \phi} : t>0, |\phi| \leq 2 \pi/p \}$. But, for sufficiently large $r$, if $z \in S_r$, then $|\operatorname{arg} e^z| \leq \pi/ 2p$, so we have $f(z) \in  B ( e^z, \epsilon(r)) \subset T_0(\nu')$. Thus $f(S_r) \subset T_0(\nu')$ for all large $r$.
\end{proof}

We define the following strips of width $2 \pi$ for all $k \in \mathbb{Z}$:
\begin{equation*}
R(k): =\{ z \in \mig: (2k-1) \pi < \operatorname{Im} z < (2k+1)\pi \},
\end{equation*}
and note the following result on the structure of the Julia set \cite[Corollary 5.7]{cbsw}.

\begin{lemma} \label{curveswd}
Let $f \in \mathcal{F}_p$, $p \geq 3$ and let $\nu \geq \nu'$, where $\nu'$ is as in Lemma \ref{davel}. For all large enough $k \in \fis$, there exist two simple unbounded curves $\gamma_k$ and $\gamma_{-k}$ in $J(f)$, with their endpoints in $Q_0$ and $Q_1$ respectively, that lie entirely inside the strips $R(k)$ and $R(-k)$ respectively, and extend to infinity through $T_0(\nu)$.
\end{lemma}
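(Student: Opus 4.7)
The plan is to construct each $\gamma_k$ as the image under a suitable univalent branch of $f^{-1}$ of a real half-line that already lies in $J(f) \cap A(f)$ and extends to infinity through $T_0(\nu)$. Assuming $\lambda > 0$ for concreteness (the case $\lambda < 0$ is analogous, with a negative real half-line as base), take $[R,\infty)$ for large $R$: the $k=0$ summand $\lambda\exp(z)$ dominates the others along the positive real axis, so $f(x) = \lambda e^x(1+o(1)) \to +\infty$ as $x \to +\infty$, and for $R$ large enough $f^n(x) \in T_0(\nu')$ for all $n \geq 1$. Lemma \ref{g} then gives $[R,\infty) \subset J(f) \cap A(f)$.

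For each large $k \in \fis$, set
\[
H_k := \{x+iy : x > X_k,\ (2k-1)\pi < y < (2k+1)\pi\},
\]
where $X_k$ is large enough that $H_k \subset R(k) \cap T_0(\nu)$; such $X_k$ exists because the argument of any point in $R(k)$ tends to $0$ as its real part tends to $+\infty$. The estimate from the proof of Lemma \ref{sr} (coming from \cite[Lemma~4.1]{dave2}) gives $f(z) = \lambda e^z + O(\varepsilon(x))$ uniformly on $H_k$ with $\varepsilon(x) \to 0$ as $x \to \infty$. A Rouché comparison between $f(z)-w$ and $\lambda e^z - w$ then yields, for each $w$ in an appropriate right half-plane-like region inside $f(H_k)$, a unique preimage $\phi_k(w) \in H_k$ close to $\log(w/\lambda) + 2k\pi i$, defining a univalent inverse branch $\phi_k$.

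Set $\gamma_k := \phi_k([R_k,\infty))$ with $R_k := \lambda\exp(2k\pi\cot(\pi/p))$, chosen so that the finite endpoint $\phi_k(R_k)$ lies near $2k\pi\cot(\pi/p) + 2k\pi i = (2k\pi/\sin(\pi/p))e^{i\pi/p}$, a point on the central ray of $Q_0$. Then $\gamma_k$ is simple (as $\phi_k$ is univalent), is contained in $H_k \subset R(k) \cap T_0(\nu)$, its endpoint is in $Q_0$, and as $w \to \infty$, $\operatorname{Im}(\phi_k(w)) \to 2k\pi$ while $\operatorname{Re}(\phi_k(w)) \to +\infty$, so $\gamma_k$ extends to infinity through $T_0(\nu)$. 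Backward invariance of $J(f)$ together with $f(\gamma_k) = [R_k,\infty) \subset J(f)$ gives $\gamma_k \subset J(f)$. The curve $\gamma_{-k}$ is constructed identically using the branch of $f^{-1}$ with imaginary part near $-2k\pi$; by symmetry across $\pra$ its endpoint lies in $Q_1$.

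The main technical hurdle is the Rouché step: the error $\varepsilon(x)$ must stay small compared to $|\lambda|e^x$ throughout the relevant part of $H_k$, including near the finite endpoint where $x \approx 2k\pi\cot(\pi/p)$. This is true once $k$ is large, because then $x$ is itself large on all of $H_k$; this is precisely what forces the ``for all large enough $k$'' in the statement.
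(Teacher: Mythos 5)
The paper itself does not prove this lemma; it quotes it from \cite[Corollary 5.7]{cbsw}, so your pull-back construction is necessarily an independent route, and its overall shape (take a real half-line in $J(f)\cap A(f)$, justified by Lemma \ref{g}, and pull it back by the inverse branch of $f$ behaving like $w\mapsto\log(w/\lambda)+2k\pi i$) is reasonable. However, there is a genuine gap exactly at the point you label the ``main technical hurdle'', and your resolution of it is incorrect. Your intended finite endpoint $\phi_k(R_k)\approx 2k\pi\cot(\pi/p)+2k\pi i=(2k\pi/\sin(\pi/p))e^{i\pi/p}$ lies on the \emph{central ray} of $Q_0$, which is precisely the ray $V_0$ of Lemma \ref{cvs}. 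Along $V_0$ one has $|\lambda e^{\omega_p^{-1}z}|=|\lambda e^z|$ identically, so $f$ is not approximated by the single term $\lambda e^z$ there, no matter how large $k$ (equivalently $x=\operatorname{Re}z$) is: largeness of $x$ does not help, because the competing exponential grows at exactly the same rate along that ray. Hence the claimed bound ``$f(z)=\lambda e^z+O(\varepsilon(x))$ uniformly on $H_k$'' fails near the endpoint; moreover, the additive form is not available even inside $T_0(\nu)$ at height $2k\pi$ (the estimate in Lemma \ref{sr} is only for $|\operatorname{Im}z|\le\pi/2p$; from \cite[Lemma 4.1]{dave2} one only gets a \emph{relative} bound such as $|f(z)-\lambda e^z|\le\tfrac1{32}|\lambda e^z|$ there, which does suffice for a fixed-radius Rouch\'e comparison, but only where single-term domination holds). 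Worse, by Lemma \ref{cvs} the zeros and critical points of $f$ lie on $V_0$, so the inverse branch need not continue univalently up to your chosen endpoint at all; and there is an internal inconsistency, since your $H_k$ is contained in $T_0(\nu)$ while the endpoint you aim for lies inside $Q_0$, outside $H_k$. The natural repair is to terminate the curve just inside the \emph{lower edge} of $Q_0$, i.e.\ at a point where $t:=\operatorname{Im}(ze^{-i\pi/p})$ is close to $-q$: there $\operatorname{Re}z-\operatorname{Re}(\omega_p^{-1}z)=-2t\sin(\pi/p)\approx 2q\sin(\pi/p)\ge\log(32p)$, so domination persists with small relative error by the choice of $q$, the Rouch\'e/continuation argument applies up to and including the endpoint, and the endpoint still lies in $Q_0$ as the lemma requires.

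A secondary gap concerns the stated generality. The lemma is for all $p\ge3$ and all $\lambda\in\pra^{*}$, but for $\lambda<0$ and $p$ odd your fallback base curve (a negative real half-line) is not covered by Lemma \ref{g}: for odd $p$ the negative real axis lies inside one of the strips $Q_j$ rather than in $T(\nu')$ (this is exactly the delicacy the paper flags for odd $p$), and $f$ oscillates there, so neither the positive nor the negative half-line has its forward orbit trapped in $T(\nu')$. That case needs a different base curve in $J(f)\cap T_0(\nu)$ or a different argument altogether.
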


Finally, we prove a lemma which allows us to use Theorem \ref{nuria} for any function $f \in \mathcal{F}_p$, $p \geq 3$, with ease.
\begin{lemma} \label{bound2} Let $f \in \mathcal{F}_p$, $p \geq 3$. Then there exists $c >0$ such that $\operatorname{dist} (z, J(f)) \leq c$ for all $z \in F(f)$.
\end{lemma}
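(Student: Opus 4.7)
The plan is to combine the curves in $J(f)$ provided by Lemma~\ref{curveswd} with the $p$-fold rotational symmetry $f(\omega_p z) = f(z)$ to produce, outside a compact set, a collection of curves in $J(f)$ within bounded distance of every point of $\mig$. Since $J(f)$ is invariant under rotation by $2\pi/p$, applying Lemma~\ref{curveswd} gives, for every $l \in \{0, \ldots, p-1\}$ and every $|k| \geq N_0$, a simple unbounded curve $\omega_p^{-l}\gamma_k \subset J(f)$ in the rotated strip $\omega_p^{-l}R(k)$ (of width $2\pi$), extending to infinity through $T_l(\nu)$.

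Given $z \in F(f)$ with $|z|$ large, I would reduce by rotational symmetry to $z \in T_0(\nu)$ or $z \in Q_0$. Suppose first $z \in T_0(\nu)$ with $|\operatorname{Im}(z)| \geq (2N_0-1)\pi$; then $z$ lies in some strip $R(k)$ with $|k| \geq N_0$. The set $\{\operatorname{Re}(w) : w \in \gamma_k\}$ is a connected unbounded interval $[x_{\min}(\gamma_k), \infty)$, so if $\operatorname{Re}(z) \geq x_{\min}(\gamma_k)$ then $\gamma_k$ contains a point with the same real part as $z$, at distance at most $2\pi$; otherwise $z$ lies in $T_0(\nu) \cap R(k) \cap \{\operatorname{Re} < x_{\min}(\gamma_k)\}$, a bounded region whose diameter one checks to be bounded independently of $k$ and which contains the entry point of $\gamma_k$ into $T_0(\nu)$. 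If instead $|\operatorname{Im}(z)| < (2N_0-1)\pi$, the same argument applied to $\gamma_{N_0}$ or $\gamma_{-N_0}$ bounds the distance by at most $(2N_0+1)\pi$. If $z \in Q_0$ with $|z|$ large, then $\operatorname{Im}(z)$ is also large, so $z \in Q_0 \cap R(k)$ for some $|k|$ large; the intersection $Q_0 \cap R(k)$ is a bounded parallelogram whose diameter is independent of $k$ and which contains both $z$ and the endpoint of $\gamma_k$. For $|z|$ bounded, since $J(f)$ is non-empty, compactness gives a trivial bound.

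The main obstacle is the axis sub-case, where $z \in T_0(\nu)$ has small imaginary part: no curve $\gamma_k$ from Lemma~\ref{curveswd} lies in the same strip as $z$, so we must fall back on $\gamma_{\pm N_0}$ across a gap of several adjacent strips. The resulting bound then depends on $N_0$ (via $(2N_0+1)\pi$) but remains uniform in $z$, as long as $\operatorname{Re}(z)$ is large enough for $\gamma_{\pm N_0}$ to contain a point with the same real part as $z$; the exceptional $z$ with smaller $\operatorname{Re}(z)$ lie in a compact set to which the trivial bound applies.
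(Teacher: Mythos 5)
Your proposal is correct and follows essentially the same route as the paper: reduce by the $p$-fold rotational symmetry to the sector containing $T_0(\nu)$ and the adjacent strips, dispose of the bounded central region $P(\nu)$ using the non-emptiness of $J(f)$, and bound the distance elsewhere using the curves $\gamma_{\pm k} \subset J(f)$ from Lemma~\ref{curveswd}. The paper simply states that this last step ``follows immediately'' from Lemma~\ref{curveswd}, whereas you spell out the strip-by-strip estimates (including the near-axis case handled via $\gamma_{\pm N_0}$), which is a legitimate elaboration of the same argument.
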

\begin{proof} Fix $\nu \geq \nu'$. 
Due to symmetry, it suffices to prove the result for points in the Fatou set that lie in $P( \nu) \cup T_0 (\nu) \cup Q_0 \cup Q_1$. 

Suppose $z \in P(\nu)$. Since $P(\nu)$ is a bounded region around the origin, the result holds as the Julia set is non-empty.

On the other hand, if $z \in Q_0 \cup Q_1 \cup T_0(\nu)$, then the result follows immediately from Lemma \ref{curveswd}.
\end{proof}

\section{Proofs of Theorem \ref{theo} and Theorem \ref{theo2}}
We are now ready to prove our two results.
\begin{proof}[Proof of Theorem \ref{theo}]
Let $f \in \mathcal{F}_p$ for some $p \geq 3$ with $p$ even. Fix $\nu = \nu'$, where $\nu'$ is as given in Lemma \ref{davel}. Suppose that $U$ is a wandering domain for $f$, and put $U_n = f^n(U)$ for $n \geq 0$. Since $U$ is bounded (as $J(f)$ is a spider's web from Theorem \ref{sw}), it follows that each $U_n$ is a Fatou component.

Let $z \in U$. It follows from Lemma \ref{bound2} that there exists $c>0$ such that $\operatorname{dist} (f^n(z) , \partial U_n) \leq c$ for all $n \in \fis$. It then follows from Theorem \ref{nuria} that there exists a sequence $(p_n)$ in $P(f)$, such that 
\begin{equation*} 
\operatorname{dist}(p_n, U_n) \to 0 \quad \text{as} \quad  n \to \infty.
\end{equation*}
 From Corollary \ref{corpf}, we have $P(f) \subset \pra$, so
\begin{equation} 
\operatorname{dist}(U_n, \pra) \to 0 \quad \text{as} \quad  n \to \infty. \label{eq:pra}
\end{equation}

We will show that the above properties imply that, for all large enough $n \in \fis$, $U_n$ has to lie in $T_0(\nu)$; thus giving a contradiction to Lemma~\ref{g}.

\begin{figure}[h]
  \label{fig:seta}
\includegraphics[scale=0.19]{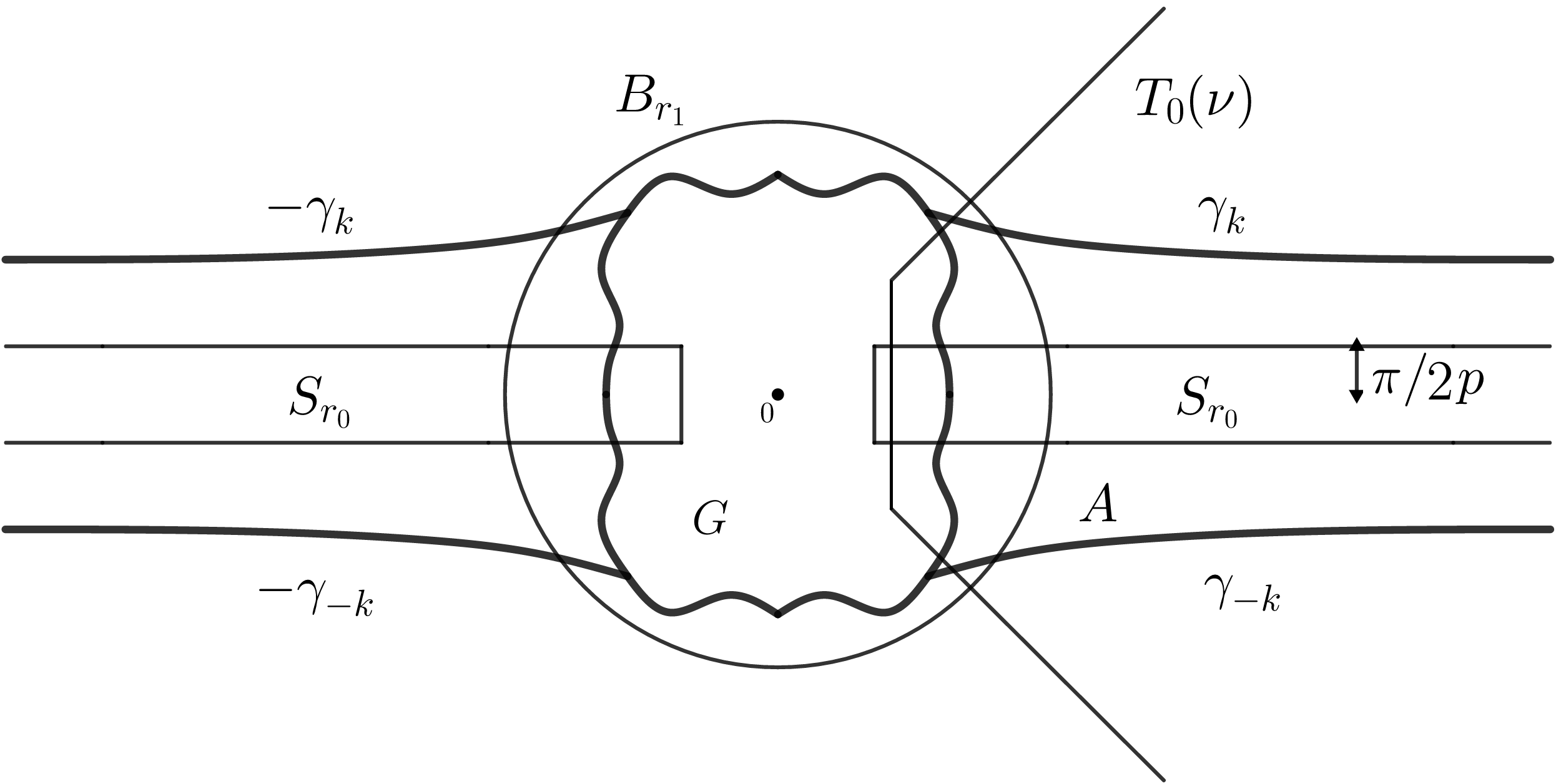}
\caption{Some of the sets described in the proof of Theorem \ref{theo}. The bold curves are in the Julia set.}
\end{figure}

To that end: from Definition \ref{swdef}, since $J(f)$ is a spider's web, we can find a domain $G$ such that the following hold (as illustrated in Figure 2):
\begin{itemize}
\item $\partial G \subset J(f)$
\item $\partial G \cap \gamma_k \neq \varnothing$ and $\partial G \cap \gamma_{-k} \neq \varnothing$,
\item $G \supset \{ z: |x| = r_0, |y| \leq \pi /2 p\}$, and
\item $P(\nu) \subset G$,
\end{itemize}
where $\gamma_k$ and $\gamma_{-k}$ are the curves in $J(f)$ defined in Lemma \ref{curveswd} for some $k \in \fis$, and $r_0$ is large enough so that  the result of Lemma \ref{sr} holds. We also define the curves $-\gamma_{k}$ and $-\gamma_{-k}$ to be the reflections about the $y$-axis of $\gamma_{k}$ and $\gamma_{-k}$ respectively. They also lie in $J(f)$ as $f$ is even.

We define $A$ to be the unbounded region in $T_0(\nu)$ with $\partial A \subset \partial G \cup \gamma_{k} \cup \gamma_{-k}\subset J(f)$ and $A \cap \pra \neq \varnothing$ (see Figure 2). 

Let $B_{r_1}$ denote the disk around the origin of radius $r_1$, where $r_1$ is large enough so that $G \subset B_{r_1}$ and
\begin{equation}
|f(z)| > |z| \text{ for } z \in T_0(\nu) \cap B_{r_1}^{\mathsf{c}}; \label{eq:display0}
\end{equation}
this is possible by \eqref{eq:anisotita}.

It follows from Theorem \ref{sw} and Corollary \ref{osb2} that points in $U$ do not have bounded orbit. Also, it follows from \eqref{eq:pra} that there exists $n_0 \in \fis$ such that
\begin{equation} \label{eq:apost}
\operatorname{dist} (U_n, \pra) \leq \pi/p \text{ for all } n \geq n_0.
\end{equation}
 Let $n_0$ also be such that $U_{n_0}$ lies outside $B_{r_1}$; this is possible because $U_n$ has to lie between two consecutive loops of the spider's web for each $n \in \fis$ and $U$ does not have bounded orbit. Thus, by Lemma \ref{sr}, we can take $w_0 \in S_{r_0} \cap U_{n_0} \cap B_{r_1}^\mathsf{c}$ such that $f(w_0) \in T_0 (\nu')$ and $|f(w_0)| > |w_0|$; in particular, $f(w_0)$ is also outside $B_{r_1}$. So
\begin{equation}
f(w_0) \in T_0(\nu) \cap B_{r_1}^{\mathsf{c}} \cap U_{n_0+1}. \label{eq:display}
\end{equation}

We know from \eqref{eq:apost} that $U_{n_0+1} \cap \{z : |\operatorname{Im} z| \leq \pi / p\} \neq \varnothing$, and, since $\partial G \cup \pm \gamma_k \cup \pm \gamma_{-k} \subset J(f)$ and $f(w_0) \in U_{n_0+1}$, it follows from \eqref{eq:display} that
\begin{equation*}
 U_{n_0 +1} \cap S_{r_0} \cap \{z: \operatorname{Re}z > 0 \} \neq \varnothing.
\end{equation*}

We thus have 
\begin{enumerate}[label=(\roman*)]
\item $U_{n_0+1}  \cap B_{r_1}^{\mathsf{c}} \neq \varnothing$
\item $U_{n_0+1} \cap S_{r_0} \cap \{z: \operatorname{Re}z > 0 \} \neq \varnothing$.
\end{enumerate}

These two properties together imply that $U_{n_0+1} \subset A \subset T_0(\nu)$. We now show that if properties (i) and (ii) are satisfied for $U_m$ (for some $m \geq n_0+1$), they will also be satisfied for $U_{m+1}$. Suppose then that (i) and (ii) are satisfied for $U_m$, for some $m \geq n_0+1$; we have $z_1 \in U_m \cap B_{r_1}^{\mathsf{c}}$ and $z_2 \in U_m \cap S_{r_0} \cap \{z: \operatorname{Re}z > 0 \}$ (and hence $U_m \subset A \subset T_0(\nu)$).

Property (i) is immediately satisfied for $U_{m+1}$, as $f(z_1) \in U_{m+1} \cap B_{r_1}^{\mathsf{c}}$ by \eqref{eq:display0}. 

Since $f(z_1) \in B_{r_1}^{\mathsf{c}}$, we have $U_{m+1} \subset \mig \setminus G$. By \eqref{eq:apost}, there exists $z_3 \in U_{m+1} \cap \{z : |\operatorname{Im}z| \leq \pi /p\}$. Since  $U_{m+1} \subset \mig \setminus G$, we have $z_3 \in S_{r_0}$. Since $f(z_2) \in U_{m+1} \cap T_0(\nu)$ by Lemma \ref{sr}, we have $z_3 \in S_{r_0} \cap \{z: \operatorname{Re}z > 0 \}$ (using the same reasoning as above), thus satisfying property (ii).

Since properties (i) and (ii) together imply that $U_m \subset A \subset T_0(\nu)$, it follows by induction that $U_m \subset A \subset T_0(\nu)$ for all $m \geq n_0+1$, giving a contradiction to Lemma \ref{g}. Thus our supposition that $U$ is a wandering domain was false.
\end{proof}

We now prove that, if $p$ is even and $|\lambda| \geq 1$, the Julia set of $f=f_{\lambda} \in \mathcal{F}_p$ is the whole plane. 

\begin{proof}[Proof of Theorem \ref{theo2}]
We offer the proof for $\lambda \geq 1$. The proof for $\lambda \leq -1$ is similar since $f$ is even as a function when $p$ is even. Let $U$ be a Fatou component of $f$. From Theorem \ref{theo}, $U$ cannot be a wandering domain. Without loss of generality, for the rest of the proof we assume that $U$ is periodic (otherwise we could work with $f^j(U)$ for some $j>0$). 

From Theorem \ref{sw} we have that $J(f)$ is a spider's web, so $U$ is bounded and, in particular, cannot be a Baker domain. The remaining cases are that $U$ belongs to an attracting cycle, a parabolic cycle, or is a Siegel disk. In each of these cases, we have $\overline{U} \cap \overline{P(f)} \neq \varnothing$ \cite[Theorem 7]{bsurvey}.  From Corollary \ref{corpf}, we have $\overline{P(f)} \subset \pra$.

 For large $x>0$ we have $f(x)>0$, and since from Lemma \ref{cvs} there are no zeros of $f$ in $\pra$ (as they lie on the rays $V_0, \ldots, V_{p-1}$ and $f(0)=\lambda p$), we have $f(x)>0$ for all $x \in \pra$. Further, we claim that $f(x)>x$ for all $x \in \pra$. Since $p$ is even, we just need to prove this for $x > 0$.

Suppose that $\lambda = 1$, which makes the function values the smallest possible within our range of values of $\lambda$. From \cite[Theorem 3.2]{cbsw}, for $x > 0$ we have 
\begin{align*}
f(x) = p \left( 1 + \frac{x^p}{p!} + \frac{x^{2p}}{(2p)!} + \ldots \right),
\end{align*} 
and thus
\begin{align*}
\frac{f(x)}{x} = \frac{p}{x} \left( 1 +  \frac{x^p}{p!}  + \frac{x^{2p}}{(2p)!} +\ldots \right).
\end{align*} 
For $x > 0$, we define
\begin{align*}
g(x) =  \frac{p}{x} \left( 1 +  \frac{x^p}{p!} \right ) <  \frac{f(x)}{x},
\end{align*} 
so we just need to show that $g(x)>1$ for $x > 0$. We have
\begin{align*}
g'(x) = - \frac{p}{x^2} + \frac{x^{p-2}}{(p-2)!} = 0
\end{align*} 
if and only if
\begin{align*}
x^p = p(p-2)!.
\end{align*} 
Hence, $g$ has a unique minimum on $(0, \infty)$, with value
\begin{align*}
\frac{p}{(p(p-2)!)^{1/p}} \left( 1 + \frac{p(p-2)!}{p!}\right) = \frac{p}{(p(p-2)!)^{1/p}} \left( 1 + \frac{1}{p-1} \right) > 1,
\end{align*} 
since $p^p > p(p-2)!$.

Thus, since $f$ is even, $f(x) > |x|$ for $x \in \pra$, and so $\pra \subset I(f)$. Since $\overline{P(f)} \subset \pra$, we have $\overline{P(f)} \subset I(f)$. Thus, $\overline{U} \cap I(f) \neq \varnothing$, which is a contradiction since $U$ is a bounded periodic Fatou component.
\end{proof}

Even though $|\lambda| \geq 1$ is not the sharpest value for the result of Theorem~\ref{theo2} to hold, in the following proposition we demonstrate that there do exist small values of $\lambda$ for which the result does not hold.

\begin{prop} Let $f=f_{1/4} \in \mathcal{F}_4$. There exists an attracting Fatou basin for $f$.
\end{prop}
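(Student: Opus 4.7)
The approach is to locate an attracting fixed point of $f = f_{1/4}$ on the positive real axis; the connected component of $F(f)$ containing such a point is, by definition, an attracting Fatou basin.

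First, I would simplify the formula for $f$. Since $\omega_4 = i$, we have
\begin{align*}
f(z) = \tfrac{1}{4}\bigl(e^z + e^{iz} + e^{-z} + e^{-iz}\bigr) = \tfrac{1}{2}\bigl(\cosh z + \cos z\bigr),
\end{align*}
so $f$ maps $\pra$ to $\pra$. Differentiating, $f'(z) = \tfrac{1}{2}(\sinh z - \sin z)$ and $f''(z) = \tfrac{1}{2}(\cosh z - \cos z)$. Comparing Taylor coefficients at $0$ term by term gives $f'(x) \geq 0$ and $f''(x) \geq 0$ for all $x \geq 0$, so both $f$ and $f'$ are non-decreasing on $[0, \infty)$.

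Second, I would check that $f(3/2) < 3/2$ and $f'(3/2) < 1$ (numerically these are about $1.21$ and $0.57$ respectively); rigorously, each inequality can be certified by truncating the Taylor series of $\cosh$ and $\sinh$ and bounding the tail. Since $f(0) = 1$ and $f$ is increasing on $[0, 3/2]$, this yields $f([0, 3/2]) \subset [1, 3/2]$. As $f(1) = \tfrac{1}{2}(\cosh 1 + \cos 1) > 1$ while $f(3/2) < 3/2$, the intermediate value theorem applied to $f(x) - x$ produces a fixed point $x_0 \in (1, 3/2)$.

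Third, because $f'$ is non-decreasing on $[0, \infty)$, we have $|f'(x_0)| \leq f'(3/2) < 1$, so $x_0$ is an attracting fixed point of $f$; hence the Fatou component of $f$ containing $x_0$ is an attracting basin. The main (and essentially only) obstacle is the careful verification of the two numerical inequalities $f(3/2) < 3/2$ and $f'(3/2) < 1$; the rest is elementary real analysis together with the standard fact that an attracting fixed point lies in an attracting Fatou basin.
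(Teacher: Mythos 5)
Your proposal is correct and follows essentially the same route as the paper: both reduce $f$ to $\tfrac{1}{2}(\cosh z+\cos z)$, find a fixed point on the positive real axis via the intermediate value theorem, and conclude it is attracting. The only cosmetic difference is that you certify $f'(x_0)<1$ directly by monotonicity of $f'$ and the explicit bound $f'(3/2)<1$, whereas the paper gets the same conclusion from $f(0)=1$, $f(\pi/2)<\pi/2$ and the convexity of $f$ on $[0,\infty)$.
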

\begin{proof}
For $\lambda=1/4$ and $p=4$ we can write
\begin{align*}
f(z) = \frac{1}{2} ( \cos z + \cosh z ).
\end{align*} 
We have $f(0) = 1$, while
\begin{align*}
f(\pi/2) = \frac{1}{2} (0 + \cosh(\pi/2)) \approx 1.25,
\end{align*} 
so $f(\pi/2) - \pi/2 < 0$. 

Since, for $x \geq 0$, we have
\begin{align*}
f(x) &=   1 + \frac{x^4}{4!} + \frac{x^{8}}{8!} + \ldots ,
\end{align*} 
the function $f$ is real, increasing and convex on $[0, \infty)$. We thus deduce that there exists an attracting fixed point of $f$ in $(0, \pi/2)$, which has to lie in an attracting Fatou basin.
\end{proof}
\newpage

\end{document}